\title[An Inductive Approach]%
{An Inductive Approach to Coxeter Arrangements\\ and Solomon's Descent Algebra}
\author[J.M. Douglass]{J. Matthew Douglass} 
\address{Department of Mathematics,
University of North Texas, 
Denton TX, USA 76203}
\email{douglass@unt.edu} 
\author[G. Pfeiffer]{G\"otz Pfeiffer}
\address{School of Mathematics, Statistics and Applied Mathematics,
National University of Ireland, Galway, University Road, Galway, Ireland}
\email{goetz.pfeiffer@nuigalway.ie}
\author[G. R\"ohrle]{Gerhard R\"ohrle}
\address
{Fakult\"at f\"ur Mathematik,
Ruhr-Universit\"at Bochum,
D-44780 Bochum, Germany}
\email{gerhard.roehrle@rub.de}
\subjclass[2010]{Primary 20F55; Secondary 52C35}
\keywords{Coxeter groups, reflection arrangements, descent algebra, dihedral groups}
\newcommand{\Size}[1]{\left| #1 \right|}
\newcommand{\Span}[1]{\left< #1 \right>}
\newcommand{\Floor}[1]{\lfloor #1 \rfloor}
\newcommand{\R}{\mathbb{R}}
\newcommand{\C}{\mathbb{C}}
\let\AA\relax
\newcommand{\AA}{\mathcal{A}}
\newcommand{\CC}{\mathcal{C}}
\newcommand{\LL}{\mathcal{L}}
\newcommand{\RR}{\mathcal{R}}
\renewcommand{\emptyset}{\varnothing}
\newcommand{\kW}{\C W}
\newcommand{\Fix}{\operatorname{Fix}}
\newcommand{\sh}{\operatorname{sh}}
\newcommand{\codim}{\operatorname{codim}}
\newcommand{\Ind}{\operatorname{Ind}}
\newcommand{\Res}{\operatorname{Res}}
\newcommand{\Av}{\operatorname{Av}}
\numberwithin{equation}{section}
\newtheorem{Theorem}[equation]{Theorem}
\newtheorem{Proposition}[equation]{Proposition}
\newtheorem{Lemma}[equation]{Lemma}
\newtheorem{Corollary}[equation]{Corollary}
\newtheorem{Conjecture}{Conjecture}
\theoremstyle{definition}
\newtheorem{Remark}[equation]{Remark}
\renewcommand{\labelenumi}{(\roman{enumi})}
\begin{document}

\begin{abstract}
  In our recent paper~\cite{DouglassPfeiffer2011}, we claimed that
  both the group algebra of a finite Coxeter group $W$ as well as the
  Orlik-Solomon algebra of~$W$ can be decomposed into a sum of
  induced one-dimensional representations of centralizers, one for
  each conjugacy class of elements of~$W$, and gave a uniform proof of
  this claim for symmetric groups.  In this note we outline an
  inductive approach to our conjecture.  As an application of this
  method, we prove the inductive version of the conjecture for finite
  Coxeter groups of rank up to~$2$.
\end{abstract}

\maketitle

\section{Introduction}
\label{sec:intro}

Let $W$ be a finite Coxeter group, generated by a set $S$ of simple
reflections.  If $\Size{S} = r$, then $W$ acts as a reflection group on
Euclidean $r$-space $V$.  The reflection arrangement of $W$ is the
hyperplane arrangement consisting of the reflecting hyperplanes in $V$ of
all the reflections in $W$.  The Orlik-Solomon algebra $A(W)$ of $W$ is the
cohomology ring of the complement of the complexified reflection
arrangement.  It follows from a result of Brieskorn~\cite{Brieskorn1973}
that the algebra $A(W)$ is a $W$-module of dimension $\Size{W}$. For some
history of the computation of $A(W)$ as a $W$-module, see the introduction
of our recent paper~\cite{DouglassPfeiffer2011}.

In~\cite{DouglassPfeiffer2011}, we claimed that both the group algebra $\kW$
of $W$ (affording the regular character $\rho_W$) as well as the
Orlik-Solomon algebra $A(W)$ (affording the Orlik-Solomon character
$\omega_W$) can be decomposed into a sum of induced one-dimensional
representations of centralizers, one for each conjugacy class of elements of
$W$, in the following interlaced way.
\begin{Conjecture}\label{conj:a}
  Let $\RR$ be a set of representatives of the conjugacy classes of
  $W$.  Then,
  for each $w \in \RR$, there are linear characters
  $\widetilde{\varphi}_w$ and $\widetilde{\psi}_w$ of $C_W(w)$ such
  that
  \begin{align*}
    \rho_W &= \sum_{w \in \RR} \Ind_{C_W(w)}^W \widetilde{\varphi}_{w}, &
    \omega_W &= \sum_{w \in \RR} \Ind_{C_W(w)}^W \widetilde{\psi}_{w}
  \end{align*}
  are sums of induced linear characters.  Moreover, for each $w \in
  \RR$, the characters $\widetilde{\varphi}_w$ and
  $\widetilde{\psi}_w$ can be chosen so that
  \begin{align*}
    \widetilde{\psi}_w = \widetilde{\varphi}_w \epsilon \alpha_w,
  \end{align*}
  where $\epsilon$ is the sign character of $W$, and $\alpha_w$ is the
  determinant on the $1$-eigenspace of~$w$.
\end{Conjecture}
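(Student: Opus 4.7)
The plan is to verify Conjecture~\ref{conj:a} directly for Coxeter groups of rank at most $2$. The ranks $0$ and $1$ are handled essentially by inspection: when $W$ is trivial both sides reduce to the trivial character on the trivial group; when $W = \{1,s\}$ the two conjugacy classes both have centralizer $W$, with $V^1 = V$ and $V^s = 0$, so $\alpha_1 = \epsilon$ and $\alpha_s = 1_W$. Setting $\widetilde{\varphi}_1 = 1_W$ and $\widetilde{\varphi}_s = \epsilon$ gives $\rho_W = 1_W + \epsilon$, and the compatibility relation then forces $\widetilde{\psi}_1 = \widetilde{\psi}_s = 1_W$, matching the Orlik--Solomon character $\omega_W = 2\cdot 1_W$ (as $W$ fixes the unique hyperplane, both $A^0$ and $A^1$ carry the trivial character).

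The bulk of the work is the rank-$2$ case, where $W = I_2(m)$ is dihedral of order $2m$. My plan begins by enumerating the conjugacy classes and their centralizers: the identity, the non-identity rotation classes $\{r^k, r^{-k}\}$ whose centralizer is the cyclic rotation subgroup $\langle r\rangle$ of order $m$ (together with the singleton class $\{-1\} = \{r^{m/2}\}$ when $m$ is even, with centralizer~$W$), and one or two reflection classes according to the parity of~$m$, with centralizer of order $2$ when $m$ is odd and a Klein four-group $\langle s\rangle\times\langle -1\rangle$ when $m$ is even. Next I would compute $V^w$ in each case, obtaining $\alpha_1 = \epsilon$ (since $V^1 = V$ and $\det_V = \epsilon$ in a real reflection group), $\alpha_{r^k} = 1$ for every non-identity rotation and for $-1$ (since these have no nonzero fixed vectors), and a specific nontrivial character $\alpha_s$ on each reflection centralizer coming from the action on the reflecting line.

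With these data I would produce explicit linear characters $\widetilde{\varphi}_w$ and verify by character arithmetic on the induced-character formula that $\sum_{w\in\RR}\Ind_{C_W(w)}^W\widetilde{\varphi}_w = \rho_W$. For the rotations the natural candidates are root-of-unity characters of the cyclic group~$\langle r\rangle$; for reflections the choices are tightly constrained by the small centralizer. The Orlik--Solomon identity is then checked using the explicit description of $A(W)$ for a rank-$2$ arrangement, which has Poincar\'e polynomial $1 + mt + (m-1)t^2$ and $W$-module structure given by the trivial character on $A^0$, the permutation character on the $m$ hyperplanes on $A^1$, and an $\epsilon$-twisted character on $A^2$.

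The main obstacle will be the simultaneous choice of $\widetilde{\varphi}_w$ for the cuspidal classes (the non-identity rotations, and $-1$ when $m$ is even), since these classes have no counterpart in any proper parabolic subgroup and so no inductive reduction to rank~$1$ is available. The rigid coupling $\widetilde{\psi}_w = \widetilde{\varphi}_w\,\epsilon\,\alpha_w$ means that a single choice must simultaneously decompose $\rho_W$ and $\omega_W$; showing that such a choice exists requires separate but parallel case analyses for $m$ even and $m$ odd, because the reflection orbit structure and the presence or absence of the central element $-1$ differ between the two cases.
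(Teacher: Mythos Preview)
Your plan is viable and will succeed for rank $\leq 2$, but it takes a different route from the paper. You propose a direct, class-by-class verification: list the conjugacy classes of $I_2(m)$, compute each centralizer and each $\alpha_w$, guess suitable $\widetilde{\varphi}_w$, and check the two character identities by brute force. The paper instead organizes the computation structurally via the \emph{shape decomposition}. It uses the descent-algebra idempotents $e_\lambda$ to write $\rho_W = \sum_{\lambda\in\Lambda}\Phi_\lambda$ and Brieskorn's decomposition to write $\omega_W = \sum_{\lambda\in\Lambda}\Psi_\lambda$, with $\Lambda$ indexing conjugacy classes of parabolic subgroups. Each $\Phi_{[L]}$ is then shown (Proposition~\ref{pro:Phi}) to be induced from a character $\widetilde{\Phi}_L$ of $N_W(W_L)$, and the heart of the argument is that $\widetilde{\Phi}_L$ itself decomposes as a sum of characters induced from centralizers of cuspidal elements of $W_L$ (Conjecture~\ref{conj:c}). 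For rank $\leq 2$ this is established by computing $e_S$ explicitly (Lemma~\ref{la:eS}), reading off $\Phi_S$, decomposing it over the rotation subgroup (Propositions~\ref{pro:i2cus0}, \ref{pro:i2cus1}), and separately proving $\Psi_S = \Phi_S\epsilon_S$ (Proposition~\ref{pro:psi=phi.e}); the non-cuspidal classes are handled automatically by the bulkiness of rank-$0$ and rank-$1$ parabolics (Theorem~\ref{thm:bulky}). What the paper's approach buys is a canonical source for the characters $\widetilde{\varphi}_w$ (they come from the descent-algebra basis) and a template that scales to higher rank; what your approach buys is that it is entirely elementary and avoids the descent-algebra machinery altogether.

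One small correction: your claim that $\alpha_s$ is a ``specific nontrivial character'' on the reflection centralizer is wrong when $m$ is odd. In that case $C_W(s) = \langle s\rangle$ and $s$ acts trivially on its own fixed line $V^s$, so $\alpha_s$ is the trivial character. (When $m$ is even your statement is correct, since $w_0 = -1$ lies in $C_W(s)$ and acts by $-1$ on $V^s$.) This does not break your plan, but you will need the correct value when carrying out the odd-$m$ verification.
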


When $W$ is a symmetric group, the formula for $\rho_W$ has been
proved independently by Bergeron, Bergeron, and
Garsia~\cite{BBGarsia1990}, Hanlon~\cite{Hanlon1990}, and
Schocker~\cite{Schocker2001}. The formula for $\omega_W$ follows from
work of Lehrer and Solomon~\cite{LehrerSolomon1986}, who also checked
the identity for $\omega_W$ in the case of a dihedral
group~$W$. Conjecture~2.1 in~\cite{DouglassPfeiffer2011} is a graded
refinement of Conjecture~\ref{conj:a} and the main result
in~\cite{DouglassPfeiffer2011} is a uniform proof of this refined
conjecture for symmetric groups.

The details of the proof of Conjecture 2.1 in~\cite{DouglassPfeiffer2011}
for symmetric groups rely on properties of these groups not shared by
other finite Coxeter groups. However, the underlying strategy of the proof
using induced characters both generalizes and admits a ``relative'' version,
for pairs $(W,W_L)$, where $W_L$ is a parabolic subgroup of $W$. In
Section~\ref{sec:claim}, we formalize this notion in Conjecture
~\ref{conj:c}, show how it leads to a proof of Conjecture ~\ref{conj:a}, and
describe a two-step procedure that can be used to prove this relative
conjecture. Prior to that, in Sections~\ref{sec:descent} and~\ref{sec:os} we
review some notation and basic facts about the descent algebra $\Sigma(W)$
and the Orlik-Solomon algebra $A(W)$. In the final section we apply the
methods from Section~\ref{sec:claim} and prove Conjecture ~\ref{conj:c} for
all pairs $(W, W_L)$ where $W$ is arbitrary and $W_L$ has rank at most
$2$. As a consequence, we deduce that Conjecture ~\ref{conj:a} holds for
Coxeter groups of rank $2$ or less.

\section{Minimal Length Transversals of Parabolic Subgroups}
\label{sec:descent}

The descent algebra of a finite Coxeter group $W$ encodes many aspects of the
combinatorics of the minimal length coset representatives of the standard
parabolic subgroups of~$W$.  In this section, we provide notation and
summarize useful properties of these distinguished coset representatives
following Pfeiffer~\cite{Pfeiffer2009}.

For $J
\subseteq S$, let
\begin{align*}
  X_J = \{w \in W : \ell(sw) > \ell(w) \text{ for all } s \in J\}.
\end{align*}
Then $X_J$ is a right transversal of the parabolic subgroup $W_J =
\Span{J}$ of $W$, consisting of the unique elements of minimal length
in their cosets.  If we set
\begin{align*}
  x_J = \sum_{x \in X_J} x^{-1} \in \kW,
\end{align*}
then, by Solomon's Theorem~\cite{Solomon1976}, the subspace
\begin{align*}
  \Sigma(W) = \Span{x_J : J \subseteq S}_{\C}
\end{align*}
is a $2^r$-dimensional subalgebra of the group algebra $\kW$, called the
descent algebra of~$W$.

For $J \subseteq S$, denote
\begin{align*}
  X_J^{\sharp} = \{x \in X_J : J^x \subseteq S\}.
\end{align*}
The action of $W$ on itself by conjugation partitions the power set of $S$
into equivalence classes of $W$-conjugate subsets.  We call the class
\begin{align*}
  [J] = \{J^x : x \in X_J^{\sharp}\}
\end{align*}
of a subset $J \subseteq S$ the \emph{shape} of $J$, and  denote by
\begin{align*}
  \Lambda = \{[J] : J \subseteq S\}
\end{align*}
the  set  of shapes  of~$W$.   The  shapes  parametrize the  conjugacy
classes  of  parabolic subgroups  of  $W$,  since  two subsets  $J,  K
\subseteq S$ are conjugate if  and only if the corresponding parabolic
subgroups $W_J$ and $W_K$ are conjugate.  We say that a parabolic subgroup
of $W$ has shape $[J]$ if it is conjugate to $W_J$ in~$W$.

Furthermore, for $J \subseteq S$, we define
\begin{align*}
  N_J = \{x \in X_J : J^x = J\}.
\end{align*}
Then $N_J$ is a subgroup of $W$ and by results of
Howlett~\cite{Howlett1980}, the normalizer of $W_J$ in $W$ is a semi-direct
product $N_W(W_J) = W_J \rtimes N_J$.

An element $w\in W$ is called \emph{cuspidal} in case $w$ has no fixed
points in the reflection representation of $W$.  For $J \subseteq S$,
an element $w \in W_J$ is cuspidal in the parabolic subgroup $W_J$ if
$w$ has no fixed points in the orthogonal complement of $\Fix(W_J)$ in
$V$.  If $w$ is a cuspidal element in $W_J$, then the quotient
$C_W(w)/C_{W_J}(w)$ is isomorphic to $N_J$
(see~\cite{KonvalinkaPfeiffer2010}).

We consider the character $\alpha_J$ of $N(W_J)$, defined, for
$w \in N_W(W_J)$, as
\begin{align*}
  \alpha_J(w) = \det(w |_{\Fix(W_J)}),
\end{align*}
where $\Fix(W_J)$ is the fixed point subspace of $W_J$ in $V$. Note that
$W_J$ is contained in the kernel of $\alpha_J$ and so $\alpha_J(u n) =
\alpha_J (n)$ for $u \in W_J$, $n \in N_J$.

\begin{Lemma}\label{la:alpha-sigma}
  Let $J \subseteq S$.  For $n \in N_J$ denote by $\sigma_J(n)$ the sign
of the permutation induced on $J$ by conjugation with $n$.
Then
\begin{align*}
  \sigma_J(n) = \epsilon(n) \alpha_J(n),
\end{align*}
for all $n \in N_J$.
\end{Lemma}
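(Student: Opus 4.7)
The plan is to compute $\epsilon(n)\alpha_J(n)$ by analysing the action of $n$ on $V$ through the orthogonal decomposition $V = \Fix(W_J) \oplus V_J$, where $V_J := \Fix(W_J)^{\perp}$ carries the reflection representation of $W_J$. Since $n \in N_J$ normalizes $W_J$, both summands are $n$-stable, and multiplicativity of the determinant gives
\begin{align*}
\epsilon(n) = \det(n|_V) = \det(n|_{\Fix(W_J)}) \cdot \det(n|_{V_J}) = \alpha_J(n) \cdot \det(n|_{V_J}).
\end{align*}
Since $\alpha_J(n) = \pm 1$, multiplying through reduces the lemma to proving the identity $\det(n|_{V_J}) = \sigma_J(n)$.

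For this, I would exhibit an explicit basis of $V_J$ on which $n$ acts by an honest (unsigned) permutation matrix. The natural candidate is the set of simple roots $\{\alpha_s : s \in J\}$, which forms a basis of $V_J$. For each $s \in J$, set $s' = n^{-1} s n \in J$. The standard identity relating conjugation of reflections and the action on roots gives that the reflection $s' = n^{-1} s n$ is along $n^{-1}(\alpha_s)$, whence $n^{-1}(\alpha_s) = \pm \alpha_{s'}$. To pin down the sign, observe that because $n \in N_J \subseteq X_J$, one has $\ell(sn) > \ell(n)$ for every $s \in J$, which is equivalent to $n^{-1}(\alpha_s)$ being a positive root. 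Hence $n^{-1}(\alpha_s) = +\alpha_{s'}$, so both $n^{-1}$ and $n$ permute the basis $\{\alpha_s : s \in J\}$ with no sign changes.

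Consequently, the matrix of $n|_{V_J}$ in this basis is a permutation matrix whose determinant is the sign of the induced permutation on $J$, namely $\sigma_J(n)$ (a permutation and its inverse share the same sign). Combined with the reduction above, this yields the desired identity $\sigma_J(n) = \epsilon(n) \alpha_J(n)$. The main obstacle is the middle step: carefully verifying that $n$ permutes the simple roots of $W_J$ without introducing signs. The critical ingredient here is the $X_J$-condition, which converts the combinatorial fact that $n$ has minimal length in its coset into the geometric statement that $n^{-1}$ preserves positivity on the roots of $W_J$. The remaining steps are routine determinant bookkeeping.
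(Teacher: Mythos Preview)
Your proof is correct and follows essentially the same strategy as the paper: decompose $V = \Fix(W_J) \oplus V_J$, use multiplicativity of the determinant, and identify $\det(n|_{V_J})$ with $\sigma_J(n)$ via the action on simple roots. In fact you are more careful than the paper, which simply asserts that the matrix of $n$ on $V_J$ is equivalent to the permutation matrix on $J$; your use of the minimal-length condition $n \in X_J$ to rule out sign changes on the simple roots supplies the justification the paper leaves implicit.
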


\begin{proof}
  Denote by $V_J$ the orthogonal complement of $\Fix(W_J)$ in $V$.  Then
  $V_J$  affords  the   reflection  representation  of  the  parabolic
  subgroup $W_J$, and the decomposition  $V = V_J \oplus \Fix(W_J)$ is
  $N_W(W_J)$-stable.  For $n  \in N_J$, the matrix of  $n$ on $V_J$ is
  equivalent to  the permutation matrix  of the conjugation  action of
  $n$ on  $J$ and thus  has determinant $\sigma_J(n)$.  The  matrix of
  $n$  on  $\Fix(W_J)$  has  determinant $\alpha_J(n)$, by definition.
  Consequently,  the  determinant of  $n$  on  $V$  is $\epsilon(n)  =
  \sigma_J(n) \alpha_J(n)$.
\end{proof}

Pfeiffer and R\"ohrle~\cite{PfeifferRoehrle2005} call $W_J$ a \emph{bulky}
parabolic subgroup of $W$ if $N_W(W_J)$ is isomorphic to the direct product
$W_J \times N_J$, or equivalently, if $N_J$ centralizes $W_J$. Notice that
$W_J$ is bulky whenever $W_J$ is a self-normalizing subgroup of $W$. Suppose
$W_J$ is bulky in $W$. Then $\sigma_J(n) = 1$ for all $n \in N_J$.
Consequently, for $u \in W_J$ and $n \in N_J$, we have
\begin{align}\label{eq:tilde-epsilon}
  \epsilon(un) \alpha_J(un) = \epsilon(u).
\end{align}
Thus, the character $\epsilon \alpha_J = \epsilon_J \times 1_{N_J}$ of
$N_W(W_J) = W_J\times N_J$ is the trivial extension of the sign character of
$W_J$.

Here and in the remainder of the paper we denote the restrictions of the trivial and
the sign character of $W$ to a subgroup $U$ of $W$ by $1_U$ and
$\epsilon_U$, respectively, or by $1_J$ and $\epsilon_J$, if $U = W_J$ for
some $J \subseteq S$.  If no confusion can arise, we denote the restrictions
of the characters $1_S$ and $\epsilon_S$ of $W$ to any of its subgroups
simply by $1$ and $\epsilon$, respectively.

Following Bergeron et al.~\cite{BergeronEtAl1992}, we decompose $\Sigma(W)$
into projective indecomposable modules, using a basis of quasi-idempotents,
that naturally arise as follows.  For $L, K \subseteq S$, we define
\[
m_{KL} = 
\begin{cases} 
\Size{\smash{X_K \cap X_L^{\sharp}}}, & \text{if $L \subseteq K$,} \\ 
0,& \text{otherwise.} 
\end{cases}
\]
Then $(m_{KL})_{K,L \subseteq S}$ is an invertible matrix, and consequently,
there is a basis $(e_L)_{L \subseteq S}$ of $\Sigma(W)$ such that
\begin{align*}
  x_K = \sum_{L \subseteq S} m_{KL} e_L
\end{align*}
for $K \subseteq S$.
Define, for $\lambda \in
\Lambda$, elements
\begin{align*}
  e_{\lambda} = \sum_{L \in \lambda} e_L.
\end{align*}
Then $\{e_{\lambda}  : \lambda  \in \Lambda\}$  is a  set of
primitive, pairwise orthogonal idempotents in $\Sigma(W)$.  In particular,
\begin{align*}
  \sum_{\lambda \in \Lambda} e_{\lambda} = 1 \in \kW.
\end{align*}
Thus, if we set
\begin{align*}
  E_{\lambda} = e_{\lambda} \kW,
\end{align*}
then  
\begin{align}\label{eq:regular-decomp}
  \kW = \bigoplus_{\lambda \in \Lambda} E_{\lambda}
\end{align}
is a decomposition of the group algebra into right ideals. We call the right
ideal $E_{[S]}$ the \emph{top component} of $\kW$.

For $\lambda \in \Lambda$, denote by $\Phi_{\lambda}$ the character of the
$W$-module $E_{\lambda}$. Furthermore, for $L \subseteq S$, denote by
$\Phi_L$ the character of the top component of the group algebra $\kW_L$.
Notice that for $\lambda=[L]$, $\Phi_{[L]}$ is a character of $W$ whereas
$\Phi_L$ is a character of $W_L$. If $L=S$, then $W_L=W$ and $\Phi_{[S]} =
\Phi_S$. In general, the characters $\Phi_{[L]}$ and $\Phi_L$ are related in
the following way.

\begin{Proposition}[\protect{\cite[Prop.~3.6(a)]{DouglassPfeiffer2011}}]\label{pro:Phi}
  Let $L \subseteq S$. Then the character $\Phi_L$ of $W_L$ extends to a
  character $\widetilde{\Phi}_L$ of the normalizer $N_W(W_L) = W_L \rtimes
  N_L$ such that
  \begin{align*}
    \Phi_{[L]} &= \Ind_{N_W(W_L)}^W \widetilde{\Phi}_L.
  \end{align*}
\end{Proposition}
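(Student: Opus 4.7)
The plan is to construct the extension $\widetilde{\Phi}_L$ intrinsically from the descent algebra of $W_L$ and then verify the induction formula via the Solomon homomorphism. For the extension, note that $\Phi_L$ is by definition the character of the right ideal $f_L \kW_L$, where $f_L \in \Sigma(W_L)$ is the primitive idempotent indexed by the top shape $\{L\}$ in $W_L$. The complement $N_L$ of $W_L$ in $N_W(W_L)$ permutes the simple generators of $W_L$ (since $L^n = L$ for $n \in N_L$), inducing an algebra automorphism of $\kW_L$ that preserves $\Sigma(W_L)$ and fixes each primitive shape idempotent; in particular $n f_L n^{-1} = f_L$. Conjugation by $n$ therefore preserves $f_L \kW_L$, and combined with left multiplication by $W_L$ it yields an action $(u,n)\cdot v = u\,(n v n^{-1})$ of $N_W(W_L) = W_L \rtimes N_L$ on $f_L \kW_L$. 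Taking $\widetilde{\Phi}_L$ to be the character of this module, we have $\widetilde{\Phi}_L|_{W_L} = \Phi_L$ by construction.

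For the induction formula, invoke the Solomon homomorphism $\theta \colon \Sigma(W) \to R(W)$ with $\theta(x_K) = \Ind_{W_K}^W 1_K$, so that $\Phi_{[L]} = \theta(e_{[L]})$. Inverting the transition matrix $(m_{KL})$ expresses $e_{[L]}$ as a signed, shape-indexed combination of the $x_K$'s, which under $\theta$ becomes a signed sum of permutation characters $\Ind_{W_K}^W 1_K$. Separately, Mackey's formula decomposes $\Ind_{N_W(W_L)}^W \widetilde{\Phi}_L$ as a double-coset sum over $W_K \backslash W / N_W(W_L)$, each term an induced character from the intersection $W_K \cap x N_W(W_L) x^{-1}$. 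Matching these two expansions reduces to a combinatorial identity in the shape poset of $W$, which one proves either by direct bookkeeping or by induction on the rank of $W$, with trivial base cases $L = \emptyset$ (where $N_W(W_L) = W$ and $\widetilde{\Phi}_L = \Phi_{[\emptyset]}$) and $L = S$ (where $N_W(W_L) = W$ and $\widetilde{\Phi}_L = \Phi_S = \Phi_{[S]}$).

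I expect the main obstacle to be this comparison in the second step. The intersections $W_K \cap x N_W(W_L) x^{-1}$ appearing in the Mackey expansion must be identified with standard parabolics of $W$ up to $W$-conjugation, and the signed Möbius contributions from the inversion of $(m_{KL})$ must be matched with the Mackey contributions term by term. This is where the Coxeter complex of $W$ and the combinatorics of the shape poset enter essentially, rather than any general property of idempotents and induced characters; in particular, the argument cannot be reduced to a formal Morita-type statement because the individual $e_L$ are not idempotents, only their shape-sums $e_\lambda$ are.
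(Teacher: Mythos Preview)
The present paper does not prove this proposition; it is quoted from \cite[Prop.~3.6(a)]{DouglassPfeiffer2011}. The only information here about how $\widetilde{\Phi}_L$ is constructed appears in Remark~\ref{rem:Phi} (in the bulky case $\widetilde{\Phi}_L = \Phi_L \times 1_{N_L}$) and in the proof of Proposition~\ref{pro:CforI2(2k+1)}, where it is recalled that $e_L\,\C W_L$ (with $e_L \in \Sigma(W)$, not $\Sigma(W_L)$) is an $N_W(W_L)$-module under right multiplication inside $\C W$, affording $\widetilde{\Phi}_L$, and that $e_L n = e_L$ for all $n \in N_L$.

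Your first step contains a genuine error. You propose the action $(u,n)\cdot v = u\,(nvn^{-1})$ on $f_L\,\C W_L$, with $u \in W_L$ acting by left multiplication. But $f_L\,\C W_L$ is a right ideal and $f_L$ is not central in $\C W_L$ in general: already for $W_L$ of type $A_2$ the character $\Phi_L$ is a single copy of the two-dimensional reflection representation, not the full four-dimensional isotypic component, so the right ideal $f_L\,\C W_L$ is not two-sided. Hence left multiplication by $u$ does not preserve $f_L\,\C W_L$, and your action is not well defined. Your observation that conjugation by $N_L$ fixes $f_L$ is correct; the repair is to combine it with \emph{right} multiplication by $W_L$, or equivalently to pass to $e_L\,\C W_L \subseteq \C W$ (via left multiplication by $x_L$, as the paper recalls) and let all of $N_W(W_L)$ act by right multiplication, using $e_L n = e_L$.

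Your second step is only an outline, and you correctly identify the combinatorial matching as the crux. The approach indicated by the paper's usage is module-theoretic rather than character-theoretic: once $\widetilde{\Phi}_L$ is realised on the concrete subspace $e_L\,\C W_L$ of $\C W$, the induction formula is established by an explicit $\C W$-module isomorphism between the induced module and $E_{[L]}$, bypassing the Mackey/M\"obius comparison you anticipate.
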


\begin{Remark}\label{rem:Phi}
  The argument in the proof of \cite[Prop.~3.6(a)]{DouglassPfeiffer2011}
  shows that if $W_L$ is a bulky parabolic subgroup of $W$, then
  $\widetilde{\Phi}_L$ is the character $\Phi_L \times 1_{N_L}$ of
  $N_W(W_L)=W_L\times N_L$ and so $\Phi_{[L]}= \Ind_{W_L\times N_L}^W (\Phi_L
  \times 1_{N_L})$.
\end{Remark}

\section{The Reflection Arrangement and the Orlik-Solomon Algebra \texorpdfstring{$A(W)$}{A(W)}}
\label{sec:os}

A finite Coxeter group of rank $r$ acts as a reflection group on Euclidean
space $\R^r$.  Here it is convenient to regard this as an action on the
complex space $V_{\C}=\C^r$.  Let
\begin{align*}
  T = \{s^w : s \in S,\, w \in W\}
\end{align*}
be the
set of reflections of~$W$.  For $t \in T$, denote by $H_t$ the reflecting
hyperplane of $t$, i.e., the $1$-eigenspace of $t$.  The set of hyperplanes
$\AA = \{H_t : t \in T\}$ is called the reflection arrangement of~$W$; for
details see~\cite[Ch.~6]{OrlikTerao1992}.  Examples of (the real part of) 
reflection
arrangements in dimension $2$ are shown in Figures~\ref{fig:i2odd}
and~\ref{fig:i2even} below.

\begin{figure}[htbp]
\begin{center}
\begin{tikzpicture}[scale=0.75]
  \draw[very thin,color=gray] (-2.2,-2.2) grid (2.2,2.2);
  \draw[->] (-2.2,0) -- (2.2,0);
  \draw[->] (0,-2.2) -- (0,2.2);
  \draw (0:1) -- (120:1) -- (240:1) -- cycle;
  \foreach \x in {0,120,240} 
    \fill[red] (\x:1) circle (.8mm);
  \foreach \x in {0,120,240} 
    \draw[red,very thick] (\x:-2) -- (\x:2);
  \draw (0:2) node[above] {$_0$};
  \draw (60:2) node[above] {$_1$};
  \draw (120:2) node[above] {$_2$};
\end{tikzpicture}
\quad
\begin{tikzpicture}[scale=0.75]
  \draw[very thin,color=gray] (-2.2,-2.2) grid (2.2,2.2);
  \draw[->] (-2.2,0) -- (2.2,0);
  \draw[->] (0,-2.2) -- (0,2.2);
  \draw (0:1) -- (72:1) -- (144:1) -- (216:1) --(288:1) -- cycle;
  \foreach \x in {0,72,144,216,288} 
    \fill[red] (\x:1) circle (.8mm);
  \foreach \x in {0,72,144,216,288} 
    \draw[red,very thick] (\x:-2) -- (\x:2);
  \draw (0:2) node[above] {$_0$};
  \draw (36:2) node[above] {$_1$};
  \draw (72:2) node[above] {$_2$};
  \draw (108:2) node[above] {$_3$};
  \draw (144:2) node[above] {$_4$};
\end{tikzpicture}
\quad
\begin{tikzpicture}[scale=0.75]
  \draw[very thin,color=gray] (-2.2,-2.2) grid (2.2,2.2);
  \draw[->] (-2.2,0) -- (2.2,0);
  \draw[->] (0,-2.2) -- (0,2.2);
  \draw (0:1) -- (51:1) -- (103:1) -- (154:1) --(206:1) --(257:1) --(309:1) -- cycle;
  \foreach \x in {0,51,103,154,206,257,309} 
    \fill[red] (\x:1) circle (.8mm);
  \foreach \x in {0,51,103,154,206,257,309} 
    \draw[red,very thick] (\x:-2) -- (\x:2);
  \draw (0:2) node[above] {$_0$};
  \draw (26:2) node[above] {$_1$};
  \draw (51:2) node[above] {$_2$};
  \draw (77:2) node[above] {$_3$};
  \draw (103:2) node[above] {$_4$};
  \draw (129:2) node[above] {$_5$};
  \draw (154:2) node[above] {$_6$};
\end{tikzpicture}
\quad
\begin{tikzpicture}[scale=0.75]
  \draw[very thin,color=gray] (-2.2,-2.2) grid (2.2,2.2);
  \draw[->] (-2.2,0) -- (2.2,0);
  \draw[->] (0,-2.2) -- (0,2.2);
  \draw (0:1) -- (40:1) -- (80:1) -- (120:1) --(160:1) --(200:1) --(240:1) --(280:1) --(320:1) -- cycle;
  \foreach \x in {0,40,80,120,160,200,240,280,320} 
    \fill[red] (\x:1) circle (.8mm);
  \foreach \x in {0,40,80,120,160,200,240,280,320} 
    \draw[red,very thick] (\x:-2) -- (\x:2);
  \draw (0:2) node[above] {$_0$};
  \draw (20:2) node[above] {$_1$};
  \draw (40:2) node[above] {$_2$};
  \draw (60:2) node[above] {$_3$};
  \draw (80:2) node[above] {$_4$};
  \draw (100:2) node[above] {$_5$};
  \draw (120:2) node[above] {$_6$};
  \draw (140:2) node[above] {$_7$};
  \draw (160:2) node[above] {$_8$};
\end{tikzpicture}
\end{center}
  \caption{Hyperplane Arrangements of Type $I_2(m)$, $m = 3,5,7,9$.}
  \label{fig:i2odd}
\end{figure}
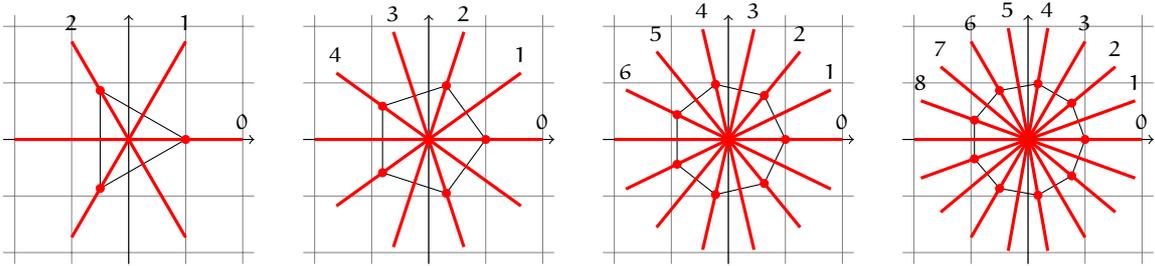

\begin{figure}[htbp]
\begin{center}
\begin{tikzpicture}[scale=0.75]
  \draw[very thin,color=gray] (-2.2,-2.2) grid (2.2,2.2);
  \draw[->] (-2.2,0) -- (2.2,0);
  \draw (0,-2.2) -- (0,2.2);
  \draw (0:1) -- (90:1) --(180:1) -- (270:1) -- cycle;
  \foreach \x in {0,90,180,270} 
    \fill[red] (\x:1) circle (.8mm);
  \foreach \x in {45,135} 
    \draw[blue,very thick] (\x:-2) -- (\x:2);
  \foreach \x in {0,90} 
    \draw[red,very thick] (\x:-2) -- (\x:2);
  \draw (0:2) node[above] {$_0$};
  \draw (45:2) node[above] {$_1$};
  \draw (90:2) node[above] {$_2$};
  \draw (135:2) node[above] {$_3$};
\end{tikzpicture}
\quad
\begin{tikzpicture}[scale=0.75]
  \draw[very thin,color=gray] (-2.2,-2.2) grid (2.2,2.2);
  \draw[->] (-2.2,0) -- (2.2,0);
  \draw (0,-2.2) -- (0,2.2);
  \draw (0:1) -- (60:1) --(120:1) -- (180:1) --(240:1) -- (300:1) -- cycle;
  \foreach \x in {0,60,120,180,240,300} 
    \fill[red] (\x:1) circle (.8mm);
  \foreach \x in {30,90,150} 
    \draw[blue,very thick] (\x:-2) -- (\x:2);
  \foreach \x in {0,60,120} 
    \draw[red,very thick] (\x:-2) -- (\x:2);
  \draw (0:2) node[above] {$_0$};
  \draw (30:2) node[above] {$_1$};
  \draw (60:2) node[above] {$_2$};
  \draw (90:2) node[above] {$_3$};
  \draw (120:2) node[above] {$_4$};
  \draw (150:2) node[above] {$_5$};
\end{tikzpicture}
\quad
\begin{tikzpicture}[scale=0.75]
  \draw[very thin,color=gray] (-2.2,-2.2) grid (2.2,2.2);
  \draw[->] (-2.2,0) -- (2.2,0);
  \draw (0,-2.2) -- (0,2.2);
  \draw (0:1) -- (45:1) -- (90:1) -- (135:1) --(180:1) --(225:1) --(270:1) --(315:1) -- cycle;
  \foreach \x in {0,45,90,135,180,225,270,315} 
    \fill[red] (\x:1) circle (.8mm);
  \foreach \x in {23,68,113,158}
    \draw[blue,very thick] (\x:-2) -- (\x:2);
  \foreach \x in {0,45,90,135}
    \draw[red,very thick] (\x:-2) -- (\x:2);
  \draw (0:2) node[above] {$_0$};
  \draw (23:2) node[above] {$_1$};
  \draw (45:2) node[above] {$_2$};
  \draw (68:2) node[above] {$_3$};
  \draw (90:2) node[above] {$_4$};
  \draw (113:2) node[above] {$_5$};
  \draw (135:2) node[above] {$_6$};
  \draw (158:2) node[above] {$_7$};
\end{tikzpicture}
\quad
\begin{tikzpicture}[scale=0.75]
  \draw[very thin,color=gray] (-2.2,-2.2) grid (2.2,2.2);
  \draw[->] (-2.2,0) -- (2.2,0);
  \draw (0,-2.2) -- (0,2.2);
  \draw (0:1) -- (36:1) -- (72:1) -- (108:1) --(144:1) --(180:1) -- (216:1) -- (252:1) -- (288:1) -- (324:1) -- cycle;
  \foreach \x in {0,36,72,108,144,180,216,252,288,324} 
    \fill[red] (\x:1) circle (.8mm);
  \foreach \x in {18,54,90,126,162} 
    \draw[blue,very thick] (\x:-2) -- (\x:2);
  \foreach \x in {0,36,72,108,144} 
    \draw[red,very thick] (\x:-2) -- (\x:2);
  \draw (0:2) node[above] {$_0$};
  \draw (18:2) node[above] {$_1$};
  \draw (36:2) node[above] {$_2$};
  \draw (54:2) node[above] {$_3$};
  \draw (72:2) node[above] {$_4$};
  \draw (90:2) node[above] {$_5$};
  \draw (108:2) node[above] {$_6$};
  \draw (126:2) node[above] {$_7$};
  \draw (144:2) node[above] {$_8$};
  \draw (162:2) node[above] {$_9$};
\end{tikzpicture}
\end{center}
  \caption{Hyperplane Arrangements of Type $I_2(m)$, $m = 4,6,8,10$.}
  \label{fig:i2even}
\end{figure}
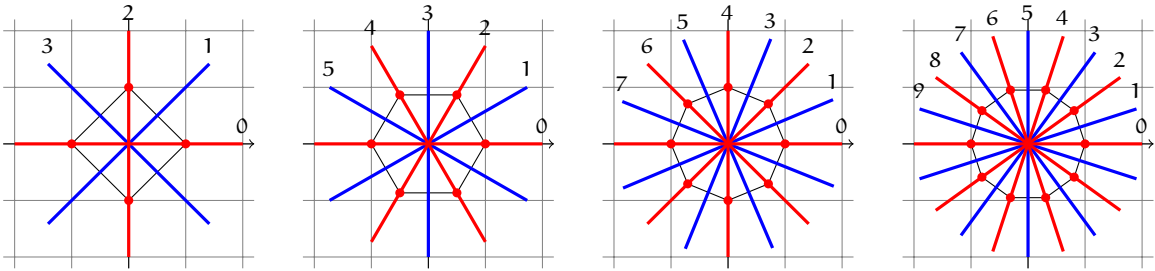

The \emph{lattice} of $\AA$ is the set of all possible intersections
of hyperplanes
\begin{align*}
  L(\AA) = \{H_{t_1} \cap \dots \cap H_{t_p} : t_1, \dots, t_p \in T\}.
\end{align*}
For $X \in L(\AA)$, the pointwise stabilizer
\begin{align*}
  W_X = \{w \in W : x.w = x \text{ for all } x \in X\}
\end{align*}
is a parabolic subgroup of $W$.  We define the \emph{shape} $\sh(X)$ of $X$
to be the shape of $W_X$, i.e., $\sh(X) = [L] \in \Lambda$ if $W_X$ is
conjugate to $W_L$ in~$W$ for some $L \subseteq S$. 
The group $W$ acts on $T$ by conjugation and the
$W$-action on $T$ induces actions of $W$ on $\AA$ and $L(\AA)$.  Orlik and
Solomon~\cite{OrlikSolomon1983} have shown that the normalizer of $W_X$ in
$W$ is the setwise stabilizer of $X$ in $W$, that is
\begin{align*}
  N_W(W_X) = \{w \in W: X.w = X\}.
\end{align*}
Consequently, the orbits of $W$ on the lattice $L(\AA)$ are parametrized
by the shapes of $W$.  We denote by $\alpha_X \colon N_W(W_X) \to \C$ the
linear character of $N_W(W_X)$ defined by
\begin{align*}
  \alpha_X(w) = \det(w|_X)
\end{align*}
for  $w \in  N_W(W_X)$.  Then,  for  $w \in  W$, we  have $\alpha_w  =
\alpha_X$, where  $X =  \Fix(w)$, the fixed  point subspace of  $w$ in
$V$.  Moreover,  for $L \subseteq  S$, we have $\alpha_L  = \alpha_X$,
where $X = \Fix(W_L)$.

The Orlik-Solomon algebra of $W$ is the associative $\C$-algebra
$A(W)$, generated as an algebra by elements $a_t$, $t \in T$,
subject to the relations
\begin{align*}
  a_t a_{t'} = - a_{t'} a_t
\end{align*}
for all $t, t' \in T$,
and 
\begin{align*}
  \sum_{i=1}^p (-1)^i a_{t_1} \dotsm a_{t_{i-1}} \widehat{a_{t_i}} a_{t_{i+1}} \dotsm a_{t_p} = 0,
\end{align*}
where the hat denotes omission, whenever $\{H_{t_1}, \dots, H_{t_p}\}$
is linearly dependent.  The action of $W$ on the hyperplanes extends to an
action on $A(W)$ via
\begin{align*}
  a_t.w = a_{t^w}
\end{align*}
for $t \in T$, $w \in W$.  The algebra $A(W)$ is a skew-commutative, graded
algebra
\begin{align*}
  A(W) = \bigoplus_{p \geq 0} A^p,
\end{align*}
where the degree $p$ subspace $A^p$ is spanned by those monomials $a_{t_1}
\dotsm a_{t_p}$ in $A(W)$ with $\dim H_{t_1} \cap \dots \cap H_{t_p} = r-p$.
Clearly, $A^p = 0$ for $p > r$.  We call $A^r$ the \emph{top component} of
$A(W)$. We need a refinement of this decomposition, due to Brieskorn
\cite{Brieskorn1973}. For a subspace $X \in L(\AA)$ of codimension $p$,
define a subspace
\begin{align*}
  A_X = \Span{a_{t_1} \dotsm a_{t_p} : H_{t_1} \cap \dots \cap H_{t_p} = X}
\end{align*}
of $A(W)$. Then $A_{\{0\}} = A^r$ is the top component of $A(W)$.  Note that
$A_X$ is an embedding of the top component of $A(W_X)$ into $A(W)$.  For $w
\in W$, we have $A_X.w = A_{X.w}$ and so $A_X$ is an $N_W(W_X)$-stable
subspace.

We have
\begin{align*}
  A(W) = \bigoplus_{X \in L(\AA)} A_X
\end{align*}
and if we set
\begin{align*}
  A_{\lambda} = \bigoplus_{\sh(X) = \lambda} A_X, 
\end{align*}
for $\lambda \in \Lambda$, 
then
\begin{align*}
  A(W) = \bigoplus_{\lambda \in \Lambda} A_{\lambda}
\end{align*}
is a decomposition of $A(W)$ into $W$-modules $A_{\lambda}$.  Note that
$A_{[S]} = A_{\{0\}}$ is the top component of $A(W)$. 

For $\lambda \in \Lambda$, denote by $\Psi_{\lambda}$ the character of the
component $A_{\lambda}$ of the Orlik-Solomon algebra $A(W)$. Furthermore,
for $L \subseteq S$, denote by $\Psi_L$ the character of the top component
of the Orlik-Solomon algebra $A(W_L)$ of the parabolic subgroup $W_L$
of~$W$. Notice that for $\lambda=[L]$, $\Psi_{[L]}$ is a character of $W$
whereas $\Psi_L$ is a character of $W_L$. If $L=S$, then $\Psi_{[S]} =
\Psi_S$. In general, the characters $\Psi_{[L]}$ and $\Psi_L$ are related in
the following way, analogous to
Proposition~\ref{pro:Phi}.

\begin{Proposition}[\protect{\cite[\S2]{LehrerSolomon1986}}]\label{pro:Psi}
  Let $L \subseteq S$. Then the character $\Psi_L$ of $W_L$ extends to a
  character $\widetilde{\Psi}_L$ of the normalizer $N_W(W_L) = W_L \rtimes
  N_L$ such that
  \begin{align*}
    \Psi_{[L]} &= \Ind_{N_W(W_L)}^W \widetilde{\Psi}_L.
  \end{align*}
\end{Proposition}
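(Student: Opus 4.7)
The plan is to construct $\widetilde{\Psi}_L$ directly from the Brieskorn-type decomposition $A(W) = \bigoplus_{X \in L(\AA)} A_X$. Set $X_0 = \Fix(W_L) \in L(\AA)$, so that $W_{X_0} = W_L$. By the result of Orlik and Solomon recalled in Section~\ref{sec:os}, the setwise stabilizer of $X_0$ in $W$ coincides with the normalizer $N_W(W_{X_0}) = N_W(W_L)$. Since $A_X.w = A_{X.w}$ for every $X \in L(\AA)$ and every $w \in W$, the subspace $A_{X_0}$ is preserved by $N_W(W_L)$, and I would define $\widetilde{\Psi}_L$ to be the character of this $N_W(W_L)$-module. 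The fact that $\widetilde{\Psi}_L$ extends $\Psi_L$ is then automatic: $A_{X_0}$ is, by construction, the embedding of the top component of $A(W_{X_0}) = A(W_L)$ into $A(W)$, so that $\Res_{W_L}^{N_W(W_L)} \widetilde{\Psi}_L = \Psi_L$.

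For the induction formula, I would use that the $W$-orbits on $L(\AA)$ are exactly the shape classes, so the subspaces $X \in L(\AA)$ with $\sh(X) = [L]$ form a single $W$-orbit, namely $X_0.W$. The definition $A_{[L]} = \bigoplus_{\sh(X) = [L]} A_X$ then presents $A_{[L]}$ as a direct sum indexed by this orbit, with $W$ permuting the summands transitively via $A_X.w = A_{X.w}$ and with the stabilizer of the distinguished summand $A_{X_0}$ equal to $N_W(W_L)$. The standard identification of such a transitive permutation module with an induced representation gives $A_{[L]} \cong \Ind_{N_W(W_L)}^W A_{X_0}$ as $\C W$-modules, and passing to characters yields $\Psi_{[L]} = \Ind_{N_W(W_L)}^W \widetilde{\Psi}_L$.

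I do not anticipate any real obstacle here. All the ingredients needed have already been assembled in Section~\ref{sec:os}: the decomposition of $A(W)$ indexed by $L(\AA)$, the identification of each $A_X$ with the embedded top component of $A(W_X)$, and the description of $N_W(W_X)$ as the setwise stabilizer of $X$. The argument runs entirely parallel to that of Proposition~\ref{pro:Phi} for the descent algebra, with the role of the right ideals $E_\lambda$ played by the Brieskorn subspaces $A_\lambda$.
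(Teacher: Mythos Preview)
Your argument is correct and is exactly the standard one: the paper does not supply its own proof here but simply cites \cite[\S2]{LehrerSolomon1986}, and what you have written is precisely the Lehrer--Solomon argument, using the Brieskorn decomposition, the identification of $N_W(W_X)$ with the setwise stabilizer of $X$, and the transitivity of the $W$-action on subspaces of a given shape to realize $A_{[L]}$ as $\Ind_{N_W(W_L)}^W A_{X_0}$. All the facts you invoke are indeed recorded in Section~\ref{sec:os}, so nothing further is needed.
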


\begin{Remark}\label{rem:Psi}
  Suppose that $W_L$ is a bulky parabolic subgroup of $W$ and set $X=
  \Fix(W_L)$. If $\codim\, X=p$ and $t_1, \dots, t_p$ are in $T$ with
  $X=H_{t_1} \cap \dots \cap H_{t_p}$, then $t_1, \dots, t_p$ are in $W_L$
  and so, since $N_L$ centralizes $W_L$, we have $a_{t_1} \dotsm a_{t_p}.n=
  a_{t_1^n} \dotsm a_{t_p^n}= a_{t_1} \dotsm a_{t_p}$, for $n \in N_L$. Thus,
  $\widetilde{\Psi}_L$ is the character $\Psi_L \times 1_{N_L}$ of
  $N_W(W_L)=W_L\times N_L$ and so $\Psi_{[L]}= \Ind_{W_L\times N_L}^W (\Psi_L
  \times 1_{N_L})$.
\end{Remark}

\section{The Inductive Strategy}
\label{sec:claim}

Before stating our relative Conjecture ~\ref{conj:c}, we briefly review the
proof of Conjecture~2.1 in~\cite{DouglassPfeiffer2011} and describe how it
leads to a proof of Conjecture~\ref{conj:a}.
We first showed that the characters of the top components of $\kW$ and
$A(W)$ are related as described in the following conjecture which makes
sense for any finite Coxeter group.
To this end, let $\CC$ be the set of cuspidal conjugacy classes of $W$ and, for
$L\subseteq S$, let $\CC_L$ denote the set of cuspidal conjugacy classes in
$W_L$. For a class $C$ in $\CC$ or $\CC_L$, we denote by $w_C\in C$ a fixed
representative.

\begin{Conjecture}\label{conj:b}
  For each class $C \in \CC$, there exist linear characters $\varphi_{w_C}$
  and $\psi_{w_C}$ of the centralizer $C_W(w_C)$ such that the following
  hold:
  \begin{enumerate}
  \item $\Phi_S = \sum_{C \in \CC} \Ind_{C_W(w_C)}^W \varphi_{w_C}$;
  \item $\Psi_S = \sum_{C \in \CC} \Ind_{C_W(w_C)}^W \psi_{w_C}$;
  \item $\psi_{w_C} = \varphi_{w_C} \epsilon$ for all $C\in \CC$.
  \end{enumerate}
\end{Conjecture}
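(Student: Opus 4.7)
The plan is to establish Conjecture~\ref{conj:b} by induction on the rank of $W$, with explicit case-by-case verification at small rank. Because the conjecture only asserts the \emph{existence} of suitable linear characters on the cuspidal centralizers, the strategy reduces to first parametrizing the cuspidal classes and their centralizers in each Coxeter type, then decomposing $\Phi_S$ and $\Psi_S$ as sums of linear characters induced from these centralizers, and finally matching the two decompositions so that the twist in condition~(iii) can be read off.

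The base cases are immediate. For $r = 0$, $\Phi_S = \Psi_S = 1$ and one takes $\varphi_1 = \psi_1 = 1$. For $r = 1$, $W$ is generated by the unique reflection $s$, which is cuspidal with $C_W(s) = W$; a direct calculation gives $\Phi_S = \epsilon$ and $\Psi_S = 1$, so $\varphi_s = \epsilon$ and $\psi_s = 1$ satisfy (i)--(iii) using $1 = \epsilon \cdot \epsilon$. For $r = 2$, $W = I_2(m)$, I would enumerate the cuspidal classes as the non-identity rotation classes $\{r^k, r^{-k}\}$ with $1 \leq k < m/2$, together with $\{r^{m/2}\}$ when $m$ is even. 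The centralizer of each non-central rotation is the cyclic rotation subgroup $\langle r \rangle \cong \Z/m$, while $r^{m/2}$ is central with centralizer~$W$. Since $\epsilon$ restricts to the trivial character on $\langle r \rangle$, condition~(iii) forces $\psi_{w_C} = \varphi_{w_C}$ on the non-central cuspidal centralizers, so it suffices to exhibit a single family realizing both (i) and (ii) on those classes. Appealing to Lehrer--Solomon~\cite{LehrerSolomon1986} for the explicit decomposition of $\Psi_S$, and performing a parallel computation of $\Phi_S$ from the idempotent $e_{[S]}$, I would then produce the required linear characters; the parities of $m$ must be treated separately because the conjugacy pattern of the simple reflections $s_0,s_1$ depends on $m \bmod 2$.

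The main obstacle is condition~(iii), which links the two families $\{\varphi_{w_C}\}$ and $\{\psi_{w_C}\}$ by a uniform sign twist: they cannot be chosen independently. Even when $\Phi_S$ and $\Psi_S$ have each been decomposed into sums of induced linear characters from cuspidal centralizers, there is no a priori reason that the two decompositions admit a class-by-class matching satisfying $\psi_{w_C} = \varphi_{w_C}\epsilon$. Beyond rank $2$, I do not expect a direct case-by-case verification to remain feasible; this is precisely what motivates the relative formulation to be developed in Section~\ref{sec:claim}, which reduces Conjecture~\ref{conj:b} for $W$ to the analogous statement for proper parabolic subgroups $W_L$ and thereby opens the way to a genuine induction on rank.
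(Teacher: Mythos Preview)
Your treatment of ranks $0$, $1$, and $2$ matches the paper: Lemmas~\ref{la:a0} and~\ref{la:a1} dispose of ranks $0$ and~$1$, and for rank~$2$ the paper computes $e_S$ explicitly (Lemma~\ref{la:eS}), reads off $\Phi_S$ (Corollary~\ref{cor:PhiS}), exhibits the $\varphi_{(st)^j}$ on the rotation subgroup separately for $m$ even and odd (Propositions~\ref{pro:i2cus0} and~\ref{pro:i2cus1}), computes $\Psi_S$ directly rather than citing Lehrer--Solomon (Proposition~\ref{pro:PsiS}), and verifies $\Psi_S = \Phi_S\epsilon_S$ (Proposition~\ref{pro:psi=phi.e}), after which (iii) and the equivalence of (i) and (ii) follow via Remark~\ref{rem:comb}.

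The genuine gap is in your inductive step. The relative formulation of Section~\ref{sec:claim} does \emph{not} reduce Conjecture~\ref{conj:b} for $W$ to Conjecture~\ref{conj:b} for proper parabolic subgroups. The implication runs the other way: Conjecture~\ref{conj:c} for the pair $(W, W_L)$ implies Conjecture~\ref{conj:b} for the \emph{smaller} group $W_L$ (Proposition~\ref{pro:c=>b}), while for $L=S$ Conjecture~\ref{conj:c} is simply a restatement of Conjecture~\ref{conj:b} for $W$ itself. What the machinery of Section~\ref{sec:claim} actually delivers is Conjecture~\ref{conj:a} (the decompositions of $\rho_W$ and $\omega_W$) once Conjecture~\ref{conj:c} is known for \emph{every} $L\subseteq S$, including $L=S$; it offers no mechanism for deducing the top-component characters $\Phi_S$, $\Psi_S$ from the lower strata. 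Accordingly the paper does not prove Conjecture~\ref{conj:b} beyond rank~$2$ (it is listed as open outside type~$A$ and rank at most~$2$), and the induction you describe cannot close that gap.
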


\begin{Remark}\label{rem:comb}
  If it is known that $\Psi_S= \Phi_S\epsilon_S$, then choosing
  $\psi_{w_C}$ or $\varphi_{w_C}$ in such a way that $\psi_{w_C} =
  \varphi_{w_C} \epsilon$, we have that part (iii) in the above
  Conjecture~\ref{conj:b} holds and that (i) and (ii) are equivalent
  statements.
\end{Remark}

When $W$ is a symmetric group, every parabolic subgroup $W_L$ of $W$ is a
product of symmetric groups and so Conjecture~\ref{conj:b} 
holds for the group $W_L$. Thus,
for $w_C \in C \in \CC_L$, we obtained linear characters $\varphi_{w_C}$ and
$\psi_{w_C}$ of $C_{W_L}(w_C)$ such that the characters $\Phi_L$ and
$\Psi_L$ of $W_L$ decompose as
\begin{equation*}
  \label{eq:1}
  \Phi_L = \sum_{C \in \CC_L} \Ind_{C_{W_L}(w_C)}^{W_L}
  \varphi_{w_C} \quad\text{and}\quad \Psi_L = \sum_{C
    \in \CC_L} \Ind_{C_{W_L}(w_C)}^{W_L} \psi_{w_C}.
\end{equation*}
We know from Propositions \ref{pro:Phi} and \ref{pro:Psi} that $\Phi_L$ and
$\Psi_L$ extend to characters $\widetilde{\Phi}_L$ and $\widetilde{\Psi}_L$
of $N_W(W_L)$. The next step in~\cite{DouglassPfeiffer2011} 
was to show that each $\varphi_{w_C}$ and
$\psi_{w_C}$ extend to characters $\widetilde{\varphi}_{w_C}$ and
$\widetilde{\psi}_{w_C}$ of $C_W(w_C)$ in such a way that
\begin{equation}\label{eq:4.2}
  \widetilde{\Phi}_L = \sum_{C \in \CC_L} \Ind_{C_W(w_C)}^{N_W(W_L)}
  \widetilde{\varphi}_{w_C} \quad\text{and}\quad  \widetilde{\Psi}_L =
  \sum_{C \in \CC_L} \Ind_{C_W(w_C)}^{N_W(W_L)} \widetilde{\psi}_{w_C},
\end{equation}
and moreover that $\widetilde{\psi}_{w_C}= \widetilde{\varphi}_{w_C}
\epsilon_S \alpha_L$ for all $C\in \CC_L$. Finally, we applied
$\Ind_{N_W(W_L)}^W$ to (\ref{eq:4.2}) and summed over the set of shapes
$[L]\in \Lambda$. Conjecture~\ref{conj:a} then follows immediately by
transitivity of induction.

Motivated by (\ref{eq:4.2}) we make the following general conjecture.

\begin{Conjecture}\label{conj:c}
  Let $L \subseteq S$. Then, for each $C \in \CC_L$, there exist linear
  characters $\widetilde{\varphi}_{w_C}$ and $\widetilde{\psi}_{w_C}$ of
  $C_W(w_C)$ such that the following hold:
  \begin{enumerate}
  \item $\widetilde{\Phi}_L = \sum_{C \in \CC_L} \Ind_{C_W(w_C)}^{N_W(W_L)} \widetilde{\varphi}_{w_C}$;
  \item $\widetilde{\Psi}_L = \sum_{C \in \CC_L} \Ind_{C_W(w_C)}^{N_W(W_L)}
      \widetilde{\psi}_{w_C}$;
  \item $\widetilde{\psi}_{w_C}= \widetilde{\varphi}_{w_C} \epsilon_S
    \alpha_L$ for all $C\in \CC_L$. 
  \end{enumerate}
\end{Conjecture}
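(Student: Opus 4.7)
The plan is a two-step inductive strategy that directly implements the blueprint sketched in Section~\ref{sec:claim}, with the outer induction on the rank of~$W_L$. By induction I may assume Conjecture~\ref{conj:b} is already known for $W_L$ regarded as a Coxeter group in its own right, which supplies, for each $C\in\CC_L$, linear characters $\varphi_{w_C}$ and $\psi_{w_C}$ of $C_{W_L}(w_C)$ satisfying $\Phi_L=\sum_C\Ind_{C_{W_L}(w_C)}^{W_L}\varphi_{w_C}$, $\Psi_L=\sum_C\Ind_{C_{W_L}(w_C)}^{W_L}\psi_{w_C}$, and $\psi_{w_C}=\varphi_{w_C}\epsilon$. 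The key geometric input is the isomorphism $C_W(w_C)/C_{W_L}(w_C)\cong N_L$, which holds because $w_C$ is cuspidal in $W_L$ and runs parallel to the semi-direct product decomposition $N_W(W_L)=W_L\rtimes N_L$. The problem thereby reduces to extending $\varphi_{w_C}$ and $\psi_{w_C}$ to linear characters $\widetilde{\varphi}_{w_C}$ and $\widetilde{\psi}_{w_C}$ of $C_W(w_C)$ in a way compatible with the extensions $\widetilde{\Phi}_L$ and $\widetilde{\Psi}_L$ provided by Propositions~\ref{pro:Phi} and~\ref{pro:Psi}.

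To carry out these extensions, I would first verify that $\varphi_{w_C}$ is invariant under the action of $N_L$ on $C_{W_L}(w_C)$ induced by the short exact sequence $1\to C_{W_L}(w_C)\to C_W(w_C)\to N_L\to 1$. When invariance holds, the set of linear extensions to $C_W(w_C)$ is a torsor over the dual group $\widehat{N_L}$; among these I would pin down the correct ones by restricting identities~(i) and~(ii) back to $W_L$ and using Mackey decomposition together with the inductive formulas for $\Phi_L$ and $\Psi_L$ to match multiplicities. The sign-compatibility condition~(iii) is essentially automatic in the bulky case: by Remarks~\ref{rem:Phi} and~\ref{rem:Psi} we have $\widetilde{\Phi}_L=\Phi_L\times 1_{N_L}$ and $\widetilde{\Psi}_L=\Psi_L\times 1_{N_L}$, while equation~(\ref{eq:tilde-epsilon}) shows that $\epsilon_S\alpha_L$ restricts to $\epsilon_L\times 1_{N_L}$ on $W_L\times N_L$; taking the trivial extension on the $N_L$-factor then lifts the parabolic identity $\psi_{w_C}=\varphi_{w_C}\epsilon$ directly to the desired relation $\widetilde{\psi}_{w_C}=\widetilde{\varphi}_{w_C}\epsilon_S\alpha_L$.

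The main obstacle is therefore the non-bulky case, where $N_L$ acts non-trivially on $W_L$ and may even permute elements of $\CC_L$. Here the $N_L$-invariance of $\varphi_{w_C}$ is not for free, and the permitted twists by characters in $\widehat{N_L}$ must be calibrated so that the induction identities and the $\epsilon_S\alpha_L$-relation survive simultaneously; this is where Lemma~\ref{la:alpha-sigma} enters decisively, replacing the abstract character $\alpha_L$ by the concrete permutation sign $\sigma_L$ of the conjugation action of $N_L$ on the generating set~$L$. For the rank-at-most-$2$ case promised in the introduction, the strategy specialises to a finite check: rank $0$ is trivial; in rank $1$ the only non-trivial cuspidal class is a single reflection and $N_L$ acts trivially on $W_L=\Span{s}$, so the bulky argument applies; rank $2$ reduces to dihedral parabolics $W_L\cong I_2(m)$, whose cuspidal rotation classes can be enumerated directly. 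The real technical work will concentrate in this last family, where one must track how $N_L$ interchanges the two generators of $W_L$ and separate the behaviour for odd~$m$ (where the two reflection classes of $W_L$ fuse in $W$) from that for even~$m$ (where they do not).
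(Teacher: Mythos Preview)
Your outline faithfully reproduces the paper's own inductive blueprint (Remark~\ref{rem:proveC} and Theorem~\ref{thm:bulky}): assume Conjecture~\ref{conj:b} for $W_L$, extend the linear characters to $C_W(w_C)$ using $C_W(w_C)/C_{W_L}(w_C)\cong N_L$, and treat the bulky case via the trivial extension on the $N_L$-factor. So far, so good---and indeed this is not a proof in the paper either: Conjecture~\ref{conj:c} is left open in general and is established only for $|L|\le 2$ (Section~\ref{sec:dihedral}) and for bulky $W_L$ (Theorem~\ref{thm:bulky}).

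There are, however, two genuine gaps in your reduction. First, the sentence ``by induction I may assume Conjecture~\ref{conj:b} is already known for $W_L$'' is not justified by induction on the rank of $W_L$: Conjecture~\ref{conj:b} for $W_L$ is precisely the case $L=S$ of Conjecture~\ref{conj:c} applied to the Coxeter group $W_L$, and it sits at the \emph{same} rank level, not a lower one. The paper does not obtain it inductively; it proves Conjecture~\ref{conj:b} directly at each rank (Lemmas~\ref{la:a0}, \ref{la:a1}, Theorem~\ref{thm:BforDihedral}) before attempting any extension. Second, in the non-bulky case you offer no mechanism beyond ``calibrate the twists in $\widehat{N_L}$''. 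The paper's argument here is concrete and not covered by your sketch: for $|L|=2$ it shows that non-bulkiness forces $W_L$ to be of type $A_1\times A_1$ or $I_2(m)$ with $m$ odd, and in the odd case (Proposition~\ref{pro:CforI2(2k+1)}) it exhibits an explicit basis $\{e_L f_j\}$ on which the action of $N_L$ and of $w_L$ coincide, so that the extension $\widetilde{\varphi}_{(st)^j}$ can be written down by hand and the relation $\widetilde{\Psi}_L=\widetilde{\Phi}_L\epsilon_S\alpha_L$ is checked generator by generator via Lemma~\ref{la:alpha-sigma}.

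Finally, your closing parity remark is inverted. A rank~$2$ parabolic with $m$ even (and $m>2$) is always bulky in a finite Coxeter group---the graph automorphism swapping $s$ and $t$ cannot be realised in $N_L$ because $s$ and $t$ lie in different $W$-conjugacy classes of reflections---so Theorem~\ref{thm:bulky} disposes of that case immediately. The genuinely non-bulky work is concentrated in $m$ odd and in $A_1\times A_1$, exactly as the paper treats it in Propositions~\ref{pro:CforA1A1} and~\ref{pro:CforI2(2k+1)}.
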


\begin{Remark}\label{rem:comc}
  If it is known that $\widetilde{\Psi}_L= \widetilde{\Phi}_L \epsilon_S
  \alpha_L$, then choosing $\widetilde{\psi}_{w_C}$ or
  $\widetilde{\varphi}_{w_C}$ in such a way that $\widetilde{\psi}_{w_C} =
  \widetilde{\varphi}_{w_C} \epsilon_S \alpha_L$, we have that part (iii) 
  in the above Conjecture~\ref{conj:c} holds
  and that (i) and (ii) are equivalent statements.
\end{Remark}

Conjecture \ref{conj:b} is known to hold in the following cases:
\begin{enumerate}\renewcommand{\labelenumi}{\arabic{enumi}.}
\item $W$ of type $A$ (see \cite[Thm.~4.1]{DouglassPfeiffer2011});
\item $W$ has rank $2$ or less (see Lemmas~\ref{la:a0} and~\ref{la:a1}, Theorem~\ref{thm:BforDihedral}).
\end{enumerate}

Conjecture \ref{conj:c} is known to hold in the following cases:
\begin{enumerate}\renewcommand{\labelenumi}{\arabic{enumi}.}
\item $W$ of type $A$; all $L$ (see \cite[Thm.~5.2]{DouglassPfeiffer2011});
\item $W$ arbitrary; $W_L$ is
bulky and satisfies Conjecture \ref{conj:b} (by Theorem \ref{thm:bulky});
\item $W$ arbitrary; $\Size{L} \leq 2$ (see Corollary~\ref{cor:0,1}, Theorem~\ref{thm:CforDihedral}).
\end{enumerate}

If Conjecture ~\ref{conj:c} holds for all $L\subseteq S$, then Conjecture
~\ref{conj:a} is true for $W$.

\begin{Theorem}\label{thm:c=>a}
  Suppose that Conjecture ~\ref{conj:c} holds for all subsets $L\subseteq S$.
  Then for each $w$ in a set $\RR$ of representatives of the conjugacy
  classes of $W$, there are linear characters $\widetilde{\varphi}_w$ and
  $\widetilde{\psi}_w$ of $C_W(w)$ such that
  \begin{enumerate}
  \item the regular character of $W$ is given by $\rho_W = \sum_{w \in \RR}
    \Ind_{C_W(w)}^W \widetilde{\varphi}_{w}$,
  \item the Orlik-Solomon character of $W$ is given by $\omega_W = \sum_{w
      \in \RR} \Ind_{C_W(w)}^W \widetilde{\psi}_{w}$, and
  \item $\widetilde{\psi}_w = \widetilde{\varphi}_w \epsilon \alpha_w$ for all $w \in \RR$.
  \end{enumerate}
\end{Theorem}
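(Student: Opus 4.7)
The plan is a shape-by-shape application of Conjecture~\ref{conj:c} combined with the decomposition of $\kW$ into the primitive-idempotent summands $E_\lambda$ and the Brieskorn-type decomposition $A(W)=\bigoplus_{\lambda\in\Lambda}A_\lambda$. Taking characters, I have $\rho_W=\sum_{[L]\in\Lambda}\Phi_{[L]}$ and $\omega_W=\sum_{[L]\in\Lambda}\Psi_{[L]}$. Fix a representative $L$ of each shape $[L]\in\Lambda$.

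For each such $L$, Propositions~\ref{pro:Phi} and~\ref{pro:Psi} write $\Phi_{[L]}=\Ind_{N_W(W_L)}^W\widetilde{\Phi}_L$ and $\Psi_{[L]}=\Ind_{N_W(W_L)}^W\widetilde{\Psi}_L$, and the hypothesized Conjecture~\ref{conj:c} then expresses $\widetilde{\Phi}_L$ and $\widetilde{\Psi}_L$ as sums over $C\in\CC_L$ of characters induced from $C_W(w_C)$, linked by $\widetilde{\psi}_{w_C}=\widetilde{\varphi}_{w_C}\epsilon_S\alpha_L$. The inclusion $C_W(w_C)\subseteq N_W(W_L)$ needed for transitivity holds because the parabolic closure of the cuspidal element $w_C\in W_L$ is $W_L$ itself, so any element of $W$ centralizing $w_C$ must preserve $W_L$. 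Applying transitivity of induction to each term and summing over shapes yields $\rho_W=\sum_{[L]}\sum_{C\in\CC_L}\Ind_{C_W(w_C)}^W\widetilde{\varphi}_{w_C}$ and the analogous identity for $\omega_W$.

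Next I would reindex this double sum by a set $\RR$ of representatives of the $W$-conjugacy classes. Every conjugacy class of $W$ has a representative cuspidal in exactly one standard parabolic $W_L$ (its parabolic closure, well-defined up to conjugacy), so taking $\RR$ to consist of one such $w_C$ per class and setting $\widetilde{\varphi}_w:=\widetilde{\varphi}_{w_C}$, $\widetilde{\psi}_w:=\widetilde{\psi}_{w_C}$ for $w=w_C\in\RR$ gives (i) and (ii). Assertion (iii) then follows from Conjecture~\ref{conj:c}(iii) together with the observation that when $w_C$ is cuspidal in $W_L$ one has $\Fix(w_C)=\Fix(W_L)$, so $\alpha_{w_C}$ equals the restriction of $\alpha_L$ to $C_W(w_C)\subseteq N_W(W_L)$, and $\epsilon_S$ restricts to the sign character $\epsilon$ on this subgroup.

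The delicate point, and in my view the main obstacle to spell out carefully, is the reindexing step: when the $N_L$-action on $\CC_L$ has non-trivial orbits, several $C\in\CC_L$ yield $W$-conjugate elements $w_C$ and the naive double sum has more terms than $|\RR|$. The resolution, foreshadowed by Remarks~\ref{rem:Phi} and~\ref{rem:Psi}, is to exploit the freedom in Conjecture~\ref{conj:c} to choose the $\widetilde{\varphi}_{w_C}$ within each $N_L$-orbit compatibly, so that their $W$-inductions combine into a single $\Ind_{C_W(w)}^W\widetilde{\varphi}_w$ per class. This is automatic in the bulky case (where $N_L$ centralizes $W_L$ and acts trivially on $\CC_L$), and in general it is precisely the bookkeeping required to translate Conjecture~\ref{conj:c} into Conjecture~\ref{conj:a}.
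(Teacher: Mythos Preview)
Your overall strategy matches the paper's proof exactly: decompose $\rho_W$ and $\omega_W$ by shapes via \eqref{eq:regular-decomp} and Propositions~\ref{pro:Phi} and~\ref{pro:Psi}, plug in Conjecture~\ref{conj:c}, and use transitivity of induction.

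The only discrepancy is your final paragraph, where you worry about fusion: that distinct cuspidal $W_L$-classes might become $W$-conjugate, forcing you to merge terms. This concern is unfounded, and the workaround you sketch is neither needed nor really available. The paper dispatches the reindexing in one line by invoking \cite[Thm.~3.2.12]{geckpfeiffer2000}: if $\LL$ is a set of shape representatives and $\RR_L$ a set of representatives of the cuspidal $W_L$-classes, then $\RR=\coprod_{L\in\LL}\RR_L$ is a complete, irredundant set of representatives of the conjugacy classes of $W$. In particular, two cuspidal elements of $W_L$ that are $W$-conjugate are already $W_L$-conjugate, so the $N_L$-action on $\CC_L$ is in fact trivial and the double sum has exactly $|\RR|$ terms. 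Once you cite this fact, parts (i)--(iii) follow immediately, with (iii) coming from your correct observation that $\Fix(w_C)=\Fix(W_L)$ forces $\alpha_{w_C}=\alpha_L|_{C_W(w_C)}$.
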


\begin{proof}
  For $L\subseteq S$, let $\RR_L$ be a set of minimal length representatives
  of the classes $\CC_L$. For a class $C \in \CC_L$, denote by $w_C \in
  \RR_L$ its representative. Let $\LL$ be a set of representatives of
  shapes, so $\Lambda = \{\, [L]\mid L \in \LL\,\}$. Then,
  by~\cite[Thm.~3.2.12]{geckpfeiffer2000}, we may assume without loss that
  \begin{align*}
    \RR = \coprod_{L \in \LL} \RR_L = \{w_C : C \in \CC_L,\, L \in \LL\}.
  \end{align*}
  Then the equality in (iii) holds. By~(\ref{eq:regular-decomp}) and
  Proposition~\ref{pro:Phi}, we have
  \begin{align*}
    \rho_W &= \sum_{\lambda \in \Lambda} \Phi_{\lambda} = \sum_{L \in \LL}
    \Ind_{N_W(W_L)}^W \widetilde{\Phi}_L = \sum_{L \in \LL} \sum_{C \in
      \CC_L} \Ind_{C_W(w_C)}^W \widetilde{\varphi}_{w_C},
  \end{align*}
  as desired. The formula for $\omega_W$ follows in the same way.
\end{proof}

Notice that in the case when $L=S$, Conjecture~\ref{conj:c} for $L\subseteq
S$ is simply a restatement of Conjecture~\ref{conj:b}. In general,
Conjecture~\ref{conj:c} for $L\subseteq S$ implies the validity of
Conjecture~\ref{conj:b} for the group $W_L$, as follows.

\begin{Proposition}\label{pro:c=>b}
Suppose that Conjecture~\ref{conj:c} holds for a subset $L\subseteq S$.
Then the restrictions
\[
\varphi_{w_C} = \Res^{C_W(w_C)}_{C_{W_L}(w_C)} \widetilde{\varphi}_{w_C}
\quad\text{and}\quad \psi_{w_C} = \Res^{C_W(w_C)}_{C_{W_L}(w_C)}
\widetilde{\psi}_{w_C}
\]
are linear characters that satisfy Conjecture~\ref{conj:b} for $W_L$.
\end{Proposition}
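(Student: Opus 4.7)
The approach is to restrict the two identities in Conjecture~\ref{conj:c} from $N_W(W_L)$ down to $W_L$, apply Mackey's formula to identify the restricted induced characters with inductions from the $W_L$-centralizers $C_{W_L}(w_C)$, and then verify the character identity in Conjecture~\ref{conj:b}(iii) by restricting the identity in Conjecture~\ref{conj:c}(iii).

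The first ingredient is group-theoretic. Since $w_C$ is cuspidal in $W_L$, the isomorphism $C_W(w_C)/C_{W_L}(w_C) \cong N_L$ recalled after the definition of $N_J$ combines with the trivial equality $C_W(w_C) \cap W_L = C_{W_L}(w_C)$ to give the product decomposition $W_L \cdot C_W(w_C) = W_L \rtimes N_L = N_W(W_L)$. Thus there is a single $(W_L, C_W(w_C))$-double coset in $N_W(W_L)$, and Mackey's formula collapses to
\[
\Res^{N_W(W_L)}_{W_L} \Ind_{C_W(w_C)}^{N_W(W_L)} \widetilde{\varphi}_{w_C} = \Ind_{C_{W_L}(w_C)}^{W_L} \varphi_{w_C},
\]
and likewise for $\widetilde{\psi}_{w_C}$ in place of $\widetilde{\varphi}_{w_C}$. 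Since $\widetilde{\Phi}_L$ and $\widetilde{\Psi}_L$ are extensions of $\Phi_L$ and $\Psi_L$ by Propositions~\ref{pro:Phi} and~\ref{pro:Psi}, restricting Conjecture~\ref{conj:c}(i) and~(ii) to $W_L$ produces parts~(i) and~(ii) of Conjecture~\ref{conj:b} for $W_L$.

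For Conjecture~\ref{conj:b}(iii), I restrict the identity $\widetilde{\psi}_{w_C} = \widetilde{\varphi}_{w_C} \epsilon_S \alpha_L$ from Conjecture~\ref{conj:c}(iii) to the subgroup $C_{W_L}(w_C) \subseteq W_L$. By definition $\alpha_L$ is the determinant on $\Fix(W_L)$, on which $W_L$ acts trivially, so $\alpha_L|_{W_L} = 1$. Moreover, every reflection in $W_L$ is a reflection in $W$, so $\epsilon_S|_{W_L}$ coincides with the sign character of $W_L$, denoted $\epsilon$ in the conjecture. Therefore $\psi_{w_C} = \varphi_{w_C} \epsilon$ on $C_{W_L}(w_C)$; linearity of $\varphi_{w_C}$ and $\psi_{w_C}$ is inherited directly from that of their extensions.

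There is no deep obstacle here. The only point requiring care is the verification that $W_L \cdot C_W(w_C) = N_W(W_L)$, which is exactly what the cuspidal quotient isomorphism $C_W(w_C)/C_{W_L}(w_C) \cong N_L$ delivers and which is precisely what makes Mackey's sum degenerate to a single term.
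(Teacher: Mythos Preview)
Your proof is correct and follows essentially the same route as the paper: the key step is the single-double-coset Mackey computation enabled by $N_W(W_L) = W_L\, C_W(w_C)$, after which restricting Conjecture~\ref{conj:c}(i)--(iii) to $W_L$ yields Conjecture~\ref{conj:b}. Your write-up is in fact slightly more explicit than the paper's, spelling out why $\alpha_L$ restricts trivially to $W_L$ and why the product $W_L\, C_W(w_C)$ fills out all of $N_W(W_L)$, whereas the paper simply cites~\cite{KonvalinkaPfeiffer2010} for the latter and declares part~(iii) ``easily seen.''
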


\begin{proof}
  By Mackey's theorem, we have
\begin{align*}
  \Res^{N_W(W_L)}_{W_L} \Ind_{C_W(w_C)}^{N_W(W_L)} \widetilde{\varphi}_{w_C}
  &= \Ind_{C_{W_L}(w_C)}^{W_L} \Res^{C_W(w_C)}_{C_{W_L}(w_C)}
  \widetilde{\varphi}_{w_C},
\end{align*}
since $N_W(W_L) = W_L C_W(w_C)$ (see~\cite{KonvalinkaPfeiffer2010}), and
therefore,
\begin{align*}
  \Phi_L &= \Res^{N_W(W_L)}_{W_L} \widetilde{\Phi}_L \\
  &= \sum_{C \in \CC_L} \Res^{N_W(W_L)}_{W_L} \Ind_{C_W(w_C)}^{N_W(W_L)}
  \widetilde{\varphi}_{w_C} \\
  &= \sum_{C \in \CC_L} \Ind_{C_{W_L}(w_C)}^{W_L}
  \Res^{C_W(w_C)}_{C_{W_L}(w_C)} \widetilde{\varphi}_{w_C}\\
  & = \sum_{C \in \CC_L} \Ind_{C_{W_L}(w_C)}^{W_L} \varphi_{w_C}.
\end{align*}
The formula for $\Psi_L$ follows in the same way. The conclusion that
$\psi_{w_C} = \varphi_{w_C} \epsilon$ for $C\in \CC_L$ is easily seen to
hold.
\end{proof}

\begin{Remark}\label{rem:proveC}
Although Conjecture ~\ref{conj:b} for $W_L$ formally follows from
Conjecture~\ref{conj:c}, as in ~\cite{DouglassPfeiffer2011}, the characters
$\widetilde{\varphi}_{w_C}$ and $\widetilde{\psi}_{w_C}$ of $C_W(w_C)$ arise
in practice as extensions of characters $\varphi_{w_C}$ and $\psi_{w_C}$ of
$C_{W_L}(w_c)$ that satisfy Conjecture~\ref{conj:b} for $W_L$. In
particular, if Conjecture~\ref{conj:b} is known to hold for $W_L$, then
using Remark~\ref{rem:comc}, to prove Conjecture~\ref{conj:c} for
$L\subseteq S$, it suffices to prove that each $\varphi_{w_C}$ extends to
$C_W(w_C)$ in such a way that Conjecture~\ref{conj:c} (i) holds and that
$\widetilde{\Psi}_L= \widetilde{\Phi}_L \epsilon_S \alpha_L$.
\end{Remark}

When $L \subseteq S$ is such that $W_L$ is a self-normalizing subgroup
of $W$ (e.g., if $L = S$), then $N_L$ is the trivial group and Conjecture
\ref{conj:b} for the group $W_L$ vacuously implies Conjecture \ref{conj:c}
for the subset $L$ in this case. More generally, whenever the
complement $N_L$ centralizes $W_L$, i.e., when $W_L$ is bulky in $W$,
Conjecture~\ref{conj:b} for $W_L$ implies Conjecture~\ref{conj:c} for
$L\subseteq S$, as follows.

\begin{Theorem}\label{thm:bulky}
  Let $L \subseteq S$. Suppose that Conjecture~\ref{conj:b} holds for the
  group $W_L$ and that $W_L$ is a bulky parabolic subgroup of $W$. Then
  Conjecture~\ref{conj:c} holds with $\widetilde{\varphi}_{w_C} =
  \varphi_{w_C} \times 1_{N_L}$ and $\widetilde{\psi}_{w_C} = \psi_{w_C}
  \times 1_{N_L}$ for each cuspidal class $C$ of $W_L$.
\end{Theorem}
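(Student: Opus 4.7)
The plan is to leverage the direct product structure coming from bulkiness. Since $W_L$ is bulky, we have $N_W(W_L) = W_L \times N_L$ with $N_L$ centralizing every element of $W_L$. Combining this with the isomorphism $C_W(w_C)/C_{W_L}(w_C) \cong N_L$ recalled in Section~\ref{sec:descent}, I first establish, for each cuspidal $w_C \in W_L$, the direct product decomposition
\begin{align*}
  C_W(w_C) = C_{W_L}(w_C) \times N_L.
\end{align*}
Indeed, $N_L \subseteq C_W(w_C)$ because $N_L$ centralizes every element of $W_L$; the intersection $C_{W_L}(w_C) \cap N_L$ is trivial because $N_L \subseteq X_L$ is a set of coset representatives for $W_L$; and the orders match by the quotient isomorphism. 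This structural decomposition is the crux of the argument, though it is fairly routine.

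Once this is in hand, the candidate characters are the external products $\widetilde{\varphi}_{w_C} = \varphi_{w_C} \times 1_{N_L}$ and $\widetilde{\psi}_{w_C} = \psi_{w_C} \times 1_{N_L}$, which are clearly linear on $C_W(w_C)$. Parts~(i) and~(ii) of Conjecture~\ref{conj:c} then follow from the standard formula for induction in a direct product, namely
\begin{align*}
  \Ind_{C_{W_L}(w_C) \times N_L}^{W_L \times N_L}(\varphi_{w_C} \times 1_{N_L})
  = \bigl(\Ind_{C_{W_L}(w_C)}^{W_L} \varphi_{w_C}\bigr) \times 1_{N_L}.
\end{align*}
Summing over $C \in \CC_L$ and applying Conjecture~\ref{conj:b}(i) for $W_L$ produces $\Phi_L \times 1_{N_L}$, which by Remark~\ref{rem:Phi} is exactly $\widetilde{\Phi}_L$. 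The same computation with $\psi$ in place of $\varphi$, invoking Conjecture~\ref{conj:b}(ii) and Remark~\ref{rem:Psi}, delivers part~(ii).

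For part~(iii), the essential input is equation~(\ref{eq:tilde-epsilon}), which says that on $N_W(W_L) = W_L \times N_L$ the character $\epsilon\alpha_L$ coincides with $\epsilon_L \times 1_{N_L}$. Restricting to $C_W(w_C) = C_{W_L}(w_C) \times N_L$ and multiplying characters yields
\begin{align*}
  \widetilde{\varphi}_{w_C} \cdot \epsilon_S \alpha_L
  = (\varphi_{w_C}\epsilon) \times 1_{N_L}
  = \psi_{w_C} \times 1_{N_L}
  = \widetilde{\psi}_{w_C},
\end{align*}
where the middle equality is Conjecture~\ref{conj:b}(iii) for $W_L$.

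The whole proof is formal once the decomposition $C_W(w_C) = C_{W_L}(w_C) \times N_L$ is verified, so I expect that preliminary structural step — which is really just the observation that bulkiness promotes the abstract quotient $C_W(w_C)/C_{W_L}(w_C) \cong N_L$ to an internal direct product — to be the only genuine obstacle, and a modest one at that. The remaining content consists of transporting the characters from $W_L$ to $N_W(W_L)$ via external products, which commutes with induction, restriction, and pointwise multiplication in the obvious way.
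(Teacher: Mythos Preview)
Your proof is correct and follows essentially the same approach as the paper's: both arguments hinge on the direct product decomposition $C_W(w_C) = C_{W_L}(w_C) \times N_L$ (which the paper states in one line), the compatibility of induction with external products, and Remarks~\ref{rem:Phi} and~\ref{rem:Psi} identifying $\widetilde{\Phi}_L$ and $\widetilde{\Psi}_L$ as $\Phi_L \times 1_{N_L}$ and $\Psi_L \times 1_{N_L}$. The only cosmetic difference is that the paper invokes Remark~\ref{rem:proveC} to reduce to verifying part~(i) together with the global identity $\widetilde{\Psi}_L = \widetilde{\Phi}_L\,\epsilon_S\alpha_L$ (which it then checks via Lemma~\ref{la:alpha-sigma}), whereas you verify parts~(ii) and~(iii) directly at the level of the individual linear characters using~(\ref{eq:tilde-epsilon}); these are equivalent formal routes through the same content.
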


\begin{proof}
  As observed in the remark above, it suffices to show that each
  $\varphi_{w_C}$ extends to $C_W(w_C)$ in such a way that
  Conjecture~\ref{conj:c} (i) holds and that $\widetilde{\Psi}_L=
  \widetilde{\Phi}_L \epsilon_S \alpha_L$.

  Because $N_L$ centralizes $W_L$, we have that the centralizer $C_W(w_C)$ is
  the direct product of $C_{W_L}(w_C)$, and $N_L$ and so
  $\widetilde{\varphi}_{w_C}$ is indeed a linear character of $C_W(w_C)$
  that extends $\varphi_{w_C}$.  Thanks to Remark~\ref{rem:Phi},
  $\widetilde{\Phi}_L = \Phi_L \times 1_{N_L}$.  Thus, by
  Conjecture~\ref{conj:b} (i) we have,
  \begin{align*}
    \widetilde{\Phi}_L &= \Phi_L \times 1_{N_L} = \sum_{C \in \CC_L}
    \Ind_{C_{W_L}(w_C)}^{W_L} \varphi_{w_C} \times 1_{N_L} \\
    &= \sum_{C \in \CC_L} \Ind_{C_{W_L}(w_C)\times N_L}^{W_L\times N_L}
    (\varphi_{w_C} \times 1_{N_L}) \\
    &= \sum_{C \in \CC_L} \Ind_{C_W(w_C)}^{N_W(W_L)}
    \widetilde{\varphi}_{w_C}.
  \end{align*}
  Hence Conjecture~\ref{conj:c} (i) holds.

  By Remark~\ref{rem:Psi}, Conjecture~\ref{conj:b} (iii),
  Remark~\ref{rem:Phi}, and Lemma~\ref{la:alpha-sigma}, we have
  \begin{align*}
    \widetilde{\Psi}_L = \Psi_L \times 1_{N_L} = \Phi_L \epsilon_L \times
    1_{N_L} \sigma_L = (\Phi_L \times 1_{N_L}) \epsilon \alpha_L =
    \widetilde{\Phi}_L \epsilon_S \alpha_L,
  \end{align*}
  using the fact that $W_L \subseteq \ker \alpha_L$, whence we are done.
\end{proof}

Combining Theorem \ref{thm:bulky} with the results
in~\cite{DouglassPfeiffer2011}, we see that if $W_L$ is a product of Coxeter
groups of type $A$ and is a bulky parabolic subgroup of $W$, then Conjecture
~\ref{conj:c} holds for $L\subseteq S$. For example, if $W_L$ is of type
$A_1\times A_3$ and $W$ is of type $E_6$, then the characters
$\varphi_{w_C}$ and $\psi_{w_C}$ constructed in ~\cite{DouglassPfeiffer2011}
satisfy Conjecture ~\ref{conj:b} and so, by Theorem \ref{thm:bulky}, they
extend to $C_W(w_C)$ and Conjecture ~\ref{conj:c} holds. Note however, that
the property of being a bulky parabolic subgroup depends in a fundamental way
on the embedding of $W_L$ in $W$. If $W_L$ is of type $A_1\times A_3$ and
$W$ is of type $E_7$, then $W_L$ is not bulky and Theorem \ref{thm:bulky}
cannot be applied.

\section{Conjectures \ref{conj:a}, \ref{conj:b} and \ref{conj:c} for Coxeter Groups of Rank up to \texorpdfstring{$2$}{2}}
\label{sec:dihedral}

In this section we show that Conjecture ~\ref{conj:c} holds for $L\subseteq
S$ for any $S$ as long as $\Size{L} \leq 2$. Note that because the type of
the ambient Coxeter group $W$ is arbitrary, even for types $A_1 \times A_1$
and $A_2$ Conjecture ~\ref{conj:c} is a stronger statement than is proved in
\cite{DouglassPfeiffer2011} for such parabolic subgroups. The strategy we
use is to first prove that Conjecture ~\ref{conj:b} holds for $W$ when the
rank of $W$ is at most $2$ and then use the procedure outlined in
Remark~\ref{rem:proveC}.  Combining Conjecture ~\ref{conj:c} with Theorem
\ref{thm:bulky} we conclude that Conjectures ~\ref{conj:a}, ~\ref{conj:b}, and
~\ref{conj:c} all hold in case the rank of $W$ is at most two.

The top components of Coxeter groups of rank $0$ or $1$ almost trivially
satisfy Conjecture~\ref{conj:b}. For later reference, we record this
explicitly in the following lemmas.

\begin{Lemma}\label{la:a0}
  The top component characters of $W_{\emptyset}$ are
  $\Phi_{\emptyset} = 1_{\emptyset}$ and $\Psi_{\emptyset} =
  1_{\emptyset}$.  Moreover, $W_{\emptyset}$ satisfies
  Conjecture~\ref{conj:b} with $\varphi_1 = 1_{\emptyset}$ and $\psi_1 =
  1_{\emptyset}$.
\end{Lemma}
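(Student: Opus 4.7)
The plan is to observe that $W_\emptyset$ is the trivial group, acting as a reflection group on the zero-dimensional space $V=\{0\}$, and then unwind all the definitions, every one of which collapses to the one-dimensional trivial setting.

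First I would identify the two top components. The descent algebra $\Sigma(W_\emptyset) = \C W_\emptyset$ is one-dimensional, so the decomposition (\ref{eq:regular-decomp}) has a single summand $E_{[\emptyset]}$ equal to $\C W_\emptyset$ itself. Hence the top component affords the trivial (and only) character, $\Phi_\emptyset = 1_\emptyset$. For the Orlik-Solomon side, since $W_\emptyset$ has no reflections, $\AA$ is empty and $A(W_\emptyset)$ coincides with its degree zero part $A^0 = \C$. The top component is $A_{\{0\}} = A^0$ (of codimension $0 = r$ in $V = \{0\}$), on which $W_\emptyset$ acts trivially, giving $\Psi_\emptyset = 1_\emptyset$.

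Next I would verify Conjecture~\ref{conj:b}. The identity element $1 \in W_\emptyset$ has no fixed points in the zero-dimensional reflection representation (there being nothing to fix), so $1$ is cuspidal and $\CC = \{\{1\}\}$ with $w_C = 1$. Since $C_{W_\emptyset}(1) = W_\emptyset$, the only linear character in sight is $1_\emptyset$, so we set $\varphi_1 = \psi_1 = 1_\emptyset$. Then
\begin{align*}
\Ind_{C_{W_\emptyset}(1)}^{W_\emptyset} \varphi_1 = 1_\emptyset = \Phi_\emptyset, \qquad
\Ind_{C_{W_\emptyset}(1)}^{W_\emptyset} \psi_1 = 1_\emptyset = \Psi_\emptyset,
\end{align*}
which gives parts (i) and (ii). Finally, part (iii) holds since $\epsilon$ restricts to $1_\emptyset$ on the trivial group, so $\psi_1 = 1_\emptyset = 1_\emptyset \cdot 1_\emptyset = \varphi_1 \epsilon$.

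There is no genuine obstacle here: the lemma is a sanity check that the notational machinery of Sections~\ref{sec:descent} and~\ref{sec:os} reduces correctly in the degenerate rank-zero case, and serves as the base case for later inductive arguments involving parabolic subgroups of rank $\leq 2$.
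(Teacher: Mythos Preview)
Your proposal is correct and essentially matches the paper's treatment: the paper regards this lemma as trivial and does not supply a proof at all, simply stating it ``for later reference.'' Your explicit unwinding of the definitions is exactly what is implicitly being asserted; the only minor quibble is the phrasing ``has no fixed points\ldots (there being nothing to fix)'' --- more precisely, the cuspidal condition is that $\Fix(1)=\{0\}$, which holds here because $V=\{0\}$.
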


\begin{Lemma}\label{la:a1}
  Suppose $W$ is a Coxeter group of rank $1$, generated by $S =
  \{s\}$.  Then the top component characters of $W$ are $\Phi_S =
  \epsilon_S$ and $\Psi_S = 1_S$.  Moreover, $W$ satisfies
  Conjecture~\ref{conj:b} with $\varphi_s = \epsilon_S$ and $\psi_s =
  1_S$.
\end{Lemma}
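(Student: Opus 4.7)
The plan is to directly compute the two top component characters and then verify Conjecture~\ref{conj:b} essentially by inspection, using that $W=\{1,s\}$ has a single cuspidal conjugacy class $C=\{s\}$ with centralizer $C_W(s)=W$, so that the inductions in Conjecture~\ref{conj:b} are trivial.

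First, to identify $\Phi_S$ I would use the basis $(e_L)$ of $\Sigma(W)$ set up before Proposition~\ref{pro:Phi}. The only subsets of $S$ are $\emptyset$ and $\{s\}$, giving $X_\emptyset=\{1,s\}$, $X_{\{s\}}=\{1\}$, and hence $x_\emptyset=1+s$, $x_{\{s\}}=1$. A quick check of the four entries of $(m_{KL})$ produces the triangular change-of-basis matrix $\bigl(\begin{smallmatrix}2&0\\1&1\end{smallmatrix}\bigr)$, yielding $e_\emptyset=\tfrac12(1+s)$ and $e_{\{s\}}=\tfrac12(1-s)$. Since each shape $[\emptyset]$ and $[\{s\}]$ contains a single subset, $e_{[S]}=e_{\{s\}}$, so $E_{[S]}=e_{[S]}\kW$ is one-dimensional and spanned by $1-s$. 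The right action of $s$ sends $1-s$ to $-(1-s)$, so $E_{[S]}$ affords the sign character and $\Phi_S=\epsilon_S$.

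Second, for $\Psi_S$ the arrangement $\AA$ of $W$ consists of the single reflecting hyperplane $H_s=\{0\}$, so $A(W)=\C\oplus\C a_s$ and the top component is $A_{\{0\}}=A^1=\C a_s$. Since $a_s.s=a_{s^s}=a_s$, this one-dimensional space carries the trivial character, giving $\Psi_S=1_S$.

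Finally, Conjecture~\ref{conj:b} is immediate: the only cuspidal class is $C=\{s\}$, $C_W(s)=W$, and induction from $C_W(s)$ to $W$ is the identity. Setting $\varphi_s=\epsilon_S$ and $\psi_s=1_S$ yields (i) and (ii) from the previous paragraphs, and (iii) is the identity $\epsilon_S\cdot\epsilon_S=1_S$. There is no real obstacle in this lemma; the only computation of any substance is the one-line determination of $e_{[S]}$, and everything else follows formally.
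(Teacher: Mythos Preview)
Your proof is correct and follows essentially the same approach as the paper: compute $e_{[S]}=\tfrac12(1-s)$ directly from the definition of the quasi-idempotents, observe that the resulting one-dimensional right ideal affords $\epsilon_S$, note that $W$ acts trivially on the span of $a_s$ so $\Psi_S=1_S$, and then read off Conjecture~\ref{conj:b} from $C_W(s)=W$. The only difference is that you spell out the $2\times2$ matrix $(m_{KL})$ explicitly, whereas the paper simply states $e_S=\tfrac12(1-s)$.
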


\begin{proof}
  In this case, the non-trivial conjugacy class $\{s\}$ is the unique
  cuspidal conjugacy class in $W$. From the definitions we have $e_{[S]}=
  e_S = \frac12(1 - s)$ and it follows that $W$ acts on the top component
  $E_{[S]} = e_{[S]} \kW$ with character $\Phi_{[S]} = \epsilon_S$.
  Moreover, $W$ acts trivially on the basis $\{a_s\}$ of the top component
  $A_{[S]}$ of $A(W)$, which therefore affords the trivial character. Thus,
  $\Psi_{[S]} = 1_S$ and so $\Phi_{[S]}=\Psi_{[S]} \epsilon_S$. Set
  $\varphi_s = \epsilon_S$ and $\psi_s = 1_S$. Then $\varphi_s$ and $\psi_s$
  obviously satisfy the conclusions of Conjecture~\ref{conj:b}.
\end{proof}

In any finite Coxeter group $W$, parabolic subgroups of rank $0$ and $1$ are
always bulky. We may thus conclude from Lemmas \ref{la:a0} and \ref{la:a1}
and Theorem \ref{thm:bulky} that Conjecture ~\ref{conj:c} holds for
$L\subseteq S$ with $|L|\leq 1$.

\begin{Corollary}\label{cor:0,1}
  Suppose that $L \subseteq S$ has size $\Size{L} \leq 1$.  Then
  Conjecture~\ref{conj:c} holds.
\end{Corollary}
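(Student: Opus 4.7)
The plan is to combine Lemmas~\ref{la:a0} and~\ref{la:a1} with Theorem~\ref{thm:bulky}. Lemmas~\ref{la:a0} and~\ref{la:a1} explicitly supply characters $\varphi_{w_C}$ and $\psi_{w_C}$ witnessing Conjecture~\ref{conj:b} for $W_L$ whenever $|L| \leq 1$, and Theorem~\ref{thm:bulky} lifts Conjecture~\ref{conj:b} for $W_L$ to Conjecture~\ref{conj:c} for $L \subseteq S$ as soon as $W_L$ is bulky in~$W$. So the only point that truly needs to be checked in the course of the proof is that a parabolic subgroup of rank at most $1$ is automatically bulky, regardless of the ambient group~$W$.

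This verification is the first step. If $L = \emptyset$, then $W_L$ is the trivial group, and $N_L$ centralizes $W_L$ vacuously, giving $N_W(W_L) = W_L \times N_L$. If $L = \{s\}$, then by the very definition $N_L = \{x \in X_L : s^x = s\}$, so every $n \in N_L$ centralizes the unique generator of $W_L$, and hence centralizes all of $W_L$; again $N_W(W_L) = W_L \times N_L$. In both cases $W_L$ is bulky in the sense of Pfeiffer and R\"ohrle.

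With the bulky property in hand, Theorem~\ref{thm:bulky} applies directly: it produces the extensions $\widetilde{\varphi}_{w_C} = \varphi_{w_C} \times 1_{N_L}$ and $\widetilde{\psi}_{w_C} = \psi_{w_C} \times 1_{N_L}$ on $C_W(w_C) = C_{W_L}(w_C) \times N_L$, taking for $\varphi_{w_C}$ and $\psi_{w_C}$ the characters specified in Lemma~\ref{la:a0} (when $L = \emptyset$, with $\varphi_1 = \psi_1 = 1_\emptyset$) or in Lemma~\ref{la:a1} (when $L = \{s\}$, with $\varphi_s = \epsilon_S$ and $\psi_s = 1_S$). All three clauses of Conjecture~\ref{conj:c} follow immediately from the conclusion of Theorem~\ref{thm:bulky}. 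There is no real obstacle here; the result is a direct specialization of the bulky-parabolic machinery of Section~\ref{sec:claim}, and is essentially the observation already recorded in the paragraph preceding the statement of the corollary.
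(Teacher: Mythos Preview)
Your proposal is correct and follows exactly the same approach as the paper: the paper's proof is precisely the sentence preceding the corollary, which invokes Lemmas~\ref{la:a0} and~\ref{la:a1} together with Theorem~\ref{thm:bulky} after asserting that rank~$0$ and rank~$1$ parabolics are always bulky. You have simply made explicit the (easy) verification of bulkiness that the paper leaves implicit, namely that $N_L = \{x \in X_L : L^x = L\}$ forces $s^x = s$ when $L = \{s\}$.
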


As a consequence of the corollary, $W$ acts trivially on both the component
$E_{[\emptyset]}$ of the group algebra $\kW$ (with character
$\Phi_{[\emptyset]} = \widetilde{\Phi}_{\emptyset} = 1_S$) and the component
$A_{[\emptyset]}$ of the Orlik-Solomon algebra $A(W)$ (with character
$\Psi_{[\emptyset]} = \widetilde{\Psi}_{\emptyset} = 1_S$), as one can
easily establish directly.

Moreover, the degree $1$ component of $A(W)$ is a direct sum of transitive
permutation modules, one for each conjugacy class of reflections of $W$.
This agrees with the description of the degree $1$ component of $A(W)$ as
the permutation representation of $W$ on its reflections, that can easily be
obtained directly.

Next we consider the case when $W$ has rank $2$. Until further notice, we
assume that
\begin{align*}
  W = \Span{s, t : s^2 = t^2 =  (st)^m = 1}. 
\end{align*}
Then $W$ is a Coxeter group of rank two and is of type $A_1 \times A_1$, or
$I_2(m)$ for $m \geq 3$, with Coxeter generators $S = \{s, t\}$. For
convenience, we regard type $A_1 \times A_1$ as type $I_2(2)$, noting that
the general results of this section remain true for $m = 2$.

To prove Conjecture~\ref{conj:b} for $W$, we first compute the character
$\Phi_S$ of the top component $E_{[S]}$ of the group algebra $\kW$, and
verify that it is a sum of induced linear characters.  Then we compute the
character $\Psi_S$ of the top component $A_{[S]}$ of the Orlik-Solomon
algebra $A(W)$ and verify that $\Psi_S = \Phi_S \epsilon_S$. Conjecture
~\ref{conj:b} then follows as observed in Remark \ref{rem:comb}.

As usual, denote by $w_0$ the longest element of $W$.  Furthermore, we
denote 
\begin{align*}
  \Av(U) = \frac1{\Size{U}} \sum_{u \in U} u
\end{align*}
for a subgroup $U$ of
$W$.  Recall that $\Av(U) u = \Av(U)$ for all $u \in U$ and that $\Av(U) \C
W$ is the permutation module of $W$ on the cosets of~$U$.

\begin{Lemma}\label{la:eS} 
$e_S = \Av(\Span{w_0}) - \Av(W)$.
\end{Lemma}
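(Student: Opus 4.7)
The plan is to compute $e_S$ directly from the defining relations $x_K = \sum_{L \subseteq K} m_{KL}\, e_L$, exploiting the fact that in rank~$2$ there are only four subsets of $S$ to track. Two entries of the matrix $(m_{KL})$ are immediate from the definitions: since $X_\emptyset = W$, one has $x_\emptyset = \Size{W}\, e_\emptyset$, which gives $e_\emptyset = \Av(W)$; and since $X_S = \{e\}$ lies in every $X_L^{\sharp}$, one has $m_{SL} = 1$ for all $L \subseteq S$, so from $x_S = 1$ one obtains $\sum_L e_L = 1$. Combining these yields
\[
  e_S = 1 - \Av(W) - e_{\{s\}} - e_{\{t\}},
\]
so it suffices to prove $e_{\{s\}} + e_{\{t\}} = \tfrac{1}{2}(1 - w_0)$.

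To this end I would compute $x_{\{s\}} + x_{\{t\}}$ in two different ways. Combinatorially, the identity
\[
  x_{\{s\}} + x_{\{t\}} = 1 + x_\emptyset - w_0
\]
follows because every element of $W$ other than $e$ and $w_0$ has exactly one right descent in $S$ (so contributes to exactly one of the sums), while $e$ has no right descents (and thus is counted twice) and $w_0$ has both right descents (and thus is counted zero times). On the other hand, the symmetry $s \leftrightarrow t$ gives $m_{\{s\}\{s\}} = m_{\{t\}\{t\}} =: M$, and clearly $m_{\{s\}\emptyset} = m_{\{t\}\emptyset} = m$; therefore the defining relations give
\[
  x_{\{s\}} + x_{\{t\}} = x_\emptyset + M \,(e_{\{s\}} + e_{\{t\}}).
\]
Equating the two expressions yields $M\,(e_{\{s\}} + e_{\{t\}}) = 1 - w_0$.

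It remains to show $M = \Size{X_{\{s\}}^{\sharp}} = 2$, with the two contributing elements being $e$ and $sw_0$. This is the step requiring the most attention and splits naturally into two cases: when $m$ is even, $w_0$ is central, so $(sw_0)^{-1} s (sw_0) = w_0 s w_0 = s \in S$, while for $m$ odd, $w_0$ swaps $s$ and $t$, so $(sw_0)^{-1} s (sw_0) = w_0 s w_0 = t \in S$; in either case $sw_0$ lies in $X_{\{s\}}$ (since $\ell(s \cdot sw_0) = \ell(w_0) > \ell(sw_0)$). A short inspection of the remaining elements of $X_{\{s\}}$, using that all other conjugates $x^{-1}sx$ are reflections of length at least~$3$, confirms that no other element conjugates $s$ into $S$. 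Substituting $M = 2$ gives $e_{\{s\}} + e_{\{t\}} = \tfrac{1}{2}(1 - w_0)$, and hence
\[
  e_S = 1 - \Av(W) - \tfrac{1}{2}(1 - w_0) = \tfrac{1}{2}(1 + w_0) - \Av(W) = \Av(\Span{w_0}) - \Av(W),
\]
as claimed.
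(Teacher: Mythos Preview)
Your proof is correct and follows essentially the same route as the paper's: both arguments rest on the identity $x_{\{s\}} + x_{\{t\}} = 1 + x_{\emptyset} - w_0$, combine it with $e_{\emptyset} = \Av(W)$ and $\sum_L e_L = 1$, and deduce $e_{\{s\}} + e_{\{t\}} = \tfrac12(1 - w_0)$. The only difference is packaging---the paper writes down the full $4\times 4$ matrix $(m_{KL})$ and its inverse to obtain each $e_L$ separately before adding, whereas you extract just the entries you need and supply an explicit justification that $\Size{X_{\{s\}}^{\sharp}} = 2$ which the paper leaves as ``easily determined''.
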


\begin{proof}
  By Solomon's theorem~\cite{Solomon1976}, the elements
\begin{align*}
x_{\emptyset} & = 1 + s + t + st + ts + \dots + w_0, &
  x_s &= 1 + t + st + tst + \dots + w_0s, \\
  x_{st} &= 1, &
  x_t &= 1 + s + ts + sts + \dots + w_0t
\end{align*}
form a basis of the descent algebra $\Sigma(W)$.
Note that $x_t + x_s = x_{\emptyset} + 1 - w_0$.

For $L \subseteq K \subseteq S$, 
the numbers $m_{KL} = \Size{\smash{X_K \cap X_L^{\sharp}}}$ are easily determined
as
\begin{align*}
(m_{KL})_{K, L \subseteq S} &=
  \left[
    \begin{array}{cccc}
      2m & . & . & . \\
      m & 2 & . & . \\
      m & . & 2 & . \\
      1 & 1 & 1 & 1
    \end{array}
\right],&
(m_{KL})^{-1} &=
  \left[
    \begin{array}{cccc}
      \tfrac1{2m} & . & . & . \\
      -\tfrac14 & \tfrac12 & . & . \\
      -\tfrac14 & . & \tfrac12 & . \\
      \tfrac{m-1}{2m} & -\tfrac12 & -\tfrac12 & 1
    \end{array}
\right].&
\end{align*}
Hence the idempotents $e_L$ are
(cf.~\cite{BergeronEtAl1992}) 
\begin{align*}
  e_{\emptyset} &= \tfrac1{2m} x_{\emptyset}, &
  e_s &= \tfrac12 x_s - \tfrac14 x_{\emptyset}, \\
  e_{st} &= 1 - \tfrac12 x_s - \tfrac12 x_t + \tfrac{m-1}{2m} x_{\emptyset}, &
  e_t &= \tfrac12 x_t - \tfrac14 x_{\emptyset}.
\end{align*}
From $x_t + x_s = 1 + x_{\emptyset} - w_0$, it follows that $e_s + e_t
= \frac12(1 - w_0)$, and hence that $ e_S = \frac12(1 +
w_0) - e_{\emptyset} = \Av(\Span{w_0}) - \Av(W)$, as required.
\end{proof}

As an immediate consequence we obtain the character of the top component of
$\kW$.

\begin{Corollary}\label{cor:PhiS}
  The $W$-module $E_{[S]}$ affords the character $\Phi_S =
  \Ind_{\Span{w_0}}^W(1) - 1_S$.
\end{Corollary}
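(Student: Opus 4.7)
The plan is to read the corollary straight off of Lemma~\ref{la:eS}, with the only extra work being to track how the character of a right ideal behaves under the difference of two commuting idempotents. First I would observe that for the rank-$2$ Coxeter group $W$ we have $X_S = \{1\}$, so $X_S^{\sharp} = \{1\}$ and the shape $[S]$ equals $\{S\}$. Consequently $e_{[S]} = e_S$ and $E_{[S]} = e_S\,\kW$, so it suffices to identify the character of this right ideal.

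Next, set $f = \Av(\Span{w_0})$ and $g = \Av(W)$, so that Lemma~\ref{la:eS} reads $e_S = f - g$. Using the identity $\Av(W) u = \Av(W) = u\Av(W)$ for every $u \in W$, a direct check gives $fg = gf = g$, from which $(f-g)^2 = f - g$; in particular, $e_S$ is idempotent. The inclusion $g\,\kW \subseteq f\,\kW$ is immediate from $fg = g$, and the relation $gf = g$ together with idempotency shows $g\,\kW \cap (f-g)\,\kW = 0$. Hence we have a direct sum decomposition of right $\kW$-modules
\begin{align*}
  f\,\kW = g\,\kW \oplus (f-g)\,\kW.
\end{align*}

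Finally I would identify the two known summands: $f\,\kW = \Av(\Span{w_0})\,\kW$ is the permutation module of $W$ on the cosets of $\Span{w_0}$, affording the character $\Ind_{\Span{w_0}}^W(1)$, while $g\,\kW = \Av(W)\,\kW$ is the one-dimensional trivial module with character $1_S$. Taking characters in the direct sum decomposition and solving for the character of $E_{[S]} = e_S\,\kW = (f-g)\,\kW$ yields
\begin{align*}
  \Phi_S = \Ind_{\Span{w_0}}^W(1) - 1_S,
\end{align*}
as claimed.

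The argument has no real obstacle; the only point to handle with care is verifying that $(f-g)\,\kW$ genuinely complements $g\,\kW$ inside $f\,\kW$, so that the character of the right ideal $e_S\,\kW$ equals the difference of the characters of the two permutation-style modules rather than merely their virtual difference. Everything else is a direct consequence of Lemma~\ref{la:eS} and the standard description of $\Av(U)\,\kW$ as the induced module $\Ind_U^W(1)$.
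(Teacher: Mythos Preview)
Your argument is correct and is exactly the intended route: the paper states the corollary as an immediate consequence of Lemma~\ref{la:eS} without further proof, and what you have written simply makes explicit the standard passage from $e_S = \Av(\Span{w_0}) - \Av(W)$ to the character identity via the direct sum $\Av(\Span{w_0})\,\kW = \Av(W)\,\kW \oplus e_S\,\kW$.
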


Next we identify linear characters of centralizers of cuspidal elements.
Note that the group $W$ consists of $m$ reflections and $m$ rotations.
The centralizer of a rotation $w$ is the rotation subgroup $W^+ =
\Span{st}$ of $W$, unless $w$ is central in $W$.  The cuspidal classes
of $W$ are exactly the classes of nontrivial rotations, represented by
the set $\{(st)^j : j = 1, \dots, \Floor{\tfrac{m}2}\}$,
containing $w_0 = (st)^{m/2}$ in case $m$ is even.
The group $W^+$ is a cyclic group of order $m$ and it has $m$ linear
characters $\chi_j$, $j = 0,\dots,m-1$, defined by
\begin{align*}
  \chi_j(st) = \zeta_m^j
\end{align*}
for a primitive $m$th root of unity $\zeta_m$.  In the following
arguments, we make frequent use of the fact that
the sum of all the  nontrivial characters $\chi_j$ of $W^{+}$
equals the difference of its regular and its trivial character,
\begin{align*}
  \sum_{j=1}^{m-1} \chi_j = \Ind_{\{1\}}^{W^{+}}(1) - 1_{W^{+}},
\end{align*}
which obviously follows from $\sum_{j=0}^{m-1} \chi_j = \Ind_{\{1\}}^{W^{+}}(1)$
and $\chi_0 = 1_{W^{+}}$.

We distinguish two cases, depending on the parity of $m$.

\begin{Proposition}\label{pro:i2cus0}
  Suppose that $m = 2k$ with $k > 0$.  Let
  \begin{align*}
    \varphi_{(st)^j} = 
    \begin{cases}
      \chi_{2j}, &  0 < j < k, \\
      \epsilon_S, &  j = k.
    \end{cases}
  \end{align*}
  Then $\varphi_{(st)^j}$ is a linear character of $C_W((st)^j)$, for $j =
  1, \dots, k$, and
  \begin{align*}
    \sum_{j=1}^k \Ind_{C_W((st)^j)}^W(\varphi_{(st)^j}) = \epsilon_S +
    \sum_{j=1}^{k-1} \Ind_{W^+}^W(\chi_{2j}) = \Phi_S.
  \end{align*}
\end{Proposition}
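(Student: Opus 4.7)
The plan is to verify the formula by reducing everything to a clean identity for induced characters from the rotation subgroup $W^+ = \Span{st}$ of $W$. First I would confirm that each $\varphi_{(st)^j}$ is a linear character of the stated centralizer. For $0 < j < k$ the rotation $(st)^j$ is not central in $W$, so $C_W((st)^j) = W^+$ is cyclic of order $m$, and $\chi_{2j}$ is a linear character of $W^+$. For $j = k$ the element $w_0 = (st)^k$ is central (as $m = 2k$), so $C_W(w_0) = W$ and $\epsilon_S$ is a linear character of $W$.

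The main step would be the identity
\begin{align*}
  \sum_{j=0}^{k-1} \chi_{2j} = \Ind_{\Span{w_0}}^{W^+}(1_{\Span{w_0}})
\end{align*}
of class functions on $W^+$. Indeed, the characters $\chi_{2j}$ with $0 \le j < k$ are precisely those linear characters of $W^+$ that are trivial on $\Span{w_0} = \Span{(st)^k}$, since $\chi_{2j}((st)^k) = \zeta_m^{2jk} = 1$. Their sum is thus the inflation to $W^+$ of the regular character of the quotient $W^+/\Span{w_0}$, which coincides with the right hand side.

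Combining this identity with the observation that $\Ind_{W^+}^W(1_{W^+}) = 1_S + \epsilon_S$ (since $W^+ = \ker \epsilon_S$) and with transitivity of induction, I would obtain
\begin{align*}
  1_S + \epsilon_S + \sum_{j=1}^{k-1} \Ind_{W^+}^W(\chi_{2j}) &= \Ind_{W^+}^W\Bigl(\sum_{j=0}^{k-1} \chi_{2j}\Bigr) = \Ind_{\Span{w_0}}^W(1).
\end{align*}
Subtracting $1_S$ from both sides and invoking Corollary~\ref{cor:PhiS}, which asserts $\Phi_S = \Ind_{\Span{w_0}}^W(1) - 1_S$, yields the claim. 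The only step requiring any insight is the identification of the sum of even-indexed linear characters of $W^+$ with the induced character from $\Span{w_0}$; the rest is formal.
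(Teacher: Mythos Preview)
Your proof is correct and follows essentially the same approach as the paper: both identify the even-indexed characters $\chi_{2j}$ as precisely those trivial on $\Span{w_0}$, use that their sum gives the regular character of $W^+/\Span{w_0}$ inflated to $W^+$, and then combine transitivity of induction with $\Ind_{W^+}^W(1_{W^+}) = 1_S + \epsilon_S$ and Corollary~\ref{cor:PhiS}. The only cosmetic difference is that you include $\chi_0 = 1_{W^+}$ in the sum and write $\sum_{j=0}^{k-1}\chi_{2j} = \Ind_{\Span{w_0}}^{W^+}(1)$, whereas the paper omits it and writes $\sum_{j=1}^{k-1}\chi_{2j} = \Ind_{\Span{w_0}}^{W^+}(1) - 1_{W^+}$.
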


\begin{proof} Note that $C_W((st)^j) = W^+$ and $w_0$ lies in the kernel of the
  characters $\varphi_{(st)^j} = \chi_{2j}$, for all $j = 1,
  \dots,k-1$.  Hence the $\chi_{2j}$ can be regarded as
a full set of nontrivial irreducible characters of
  the quotient group $W^+/\Span{w_0}$, whence 
  their sum $\sum_{j=1}^{k-1}
  \chi_{2j}$  equals the difference of its regular and its
  trivial characters.  Thus, as a character of $W^+$, we
  have
\begin{align*}
  \sum_{j=1}^{k-1} \chi_{2j} = \Ind_{\Span{w_0}}^{W^+}(1) - 1_{W^+}.
\end{align*}
Thus
\begin{align*}
\epsilon_S +   \Ind_{W^+}^W \Bigl( \sum_{j=1}^{k-1} \chi_{2j}\Bigr) 
= \epsilon_S + \Ind_{\Span{w_0}}^W(1) - \Ind_{W^+}^W(1)
= \Ind_{\Span{w_0}}^W(1) - 1_S
= \Phi_S,
\end{align*}
where the penultimate equality
follows from 
$\Ind_{W^+}^W(1) = 1_S + \epsilon_S$.
\end{proof}

\begin{Proposition}\label{pro:i2cus1}
Suppose that $m = 2k+1$ for some $k > 0$.
  For $j = 1, \dots, k$, let
  \begin{align*}
    \varphi_{(st)^j} = \chi_j.
  \end{align*}
Then $\varphi_{(st)^j}$ is a linear character of $C_W((st)^j)$,
for $j = 1, \dots, k$, and
  \begin{align*}
\sum_{j=1}^k \Ind_{C_W((st)^j)}^W(\varphi_{(st)^j})
=
\sum_{j=1}^{k} \Ind_{W^+}^W(\chi_j) =
    \Phi_S.
  \end{align*}
\end{Proposition}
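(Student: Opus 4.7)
The approach I would take follows the structure of the proof of Proposition~\ref{pro:i2cus0}, suitably modified to accommodate the fact that when $m$ is odd the longest element $w_0$ is a reflection lying outside $W^+$, so one cannot factor characters through a quotient of $W^+$ by $\langle w_0\rangle$. First I would verify the standing claim: for $m$ odd the group $W$ has trivial center, so the centralizer of every nontrivial rotation $(st)^j$ (for $1\leq j\leq k$) is exactly $W^+$, and $\chi_j$ is automatically a linear character of $C_W((st)^j)=W^+$.

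Next, to compute $\sum_{j=1}^k \Ind_{W^+}^W(\chi_j)$, I would exploit the pairing induced by conjugation. Since $s(st)s^{-1}=(st)^{-1}$, one has $\chi_j^s=\chi_{m-j}$, and hence $\Ind_{W^+}^W(\chi_j)=\Ind_{W^+}^W(\chi_{m-j})$. Because $m$ is odd, the indices $j$ and $m-j$ are distinct for $1\leq j\leq k$, and together these $k$ pairs exhaust $\{1,\dots,m-1\}$. Starting from the identity
\begin{align*}
\sum_{j=1}^{m-1}\chi_j \;=\; \Ind_{\{1\}}^{W^+}(1)-1_{W^+}
\end{align*}
recorded immediately before Proposition~\ref{pro:i2cus0}, I would induce to $W$, apply transitivity of induction, and use $\Ind_{W^+}^W(1_{W^+})=1_S+\epsilon_S$ to obtain
\begin{align*}
2\sum_{j=1}^{k}\Ind_{W^+}^W(\chi_j) \;=\; \sum_{j=1}^{m-1}\Ind_{W^+}^W(\chi_j) \;=\; \rho_W - 1_S - \epsilon_S.
\end{align*}

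The remaining task is to identify $\Phi_S$ with $\tfrac12(\rho_W-1_S-\epsilon_S)$. By Corollary~\ref{cor:PhiS}, $\Phi_S=\Ind_{\langle w_0\rangle}^W(1)-1_S$, so it suffices to show $2\,\Ind_{\langle w_0\rangle}^W(1)=\rho_W+1_S-\epsilon_S$. I would do this by comparing character values on each conjugacy class of $W$: when $m$ is odd these are $\{1\}$, the single class of all $m$ reflections (to which $w_0$ belongs, with centralizer $\langle w_0\rangle$ of order~$2$), and the $k$ rotation classes $\{(st)^{\pm j}\}$ for $j=1,\dots,k$. A short computation with the induced character formula shows that $\Ind_{\langle w_0\rangle}^W(1)$ takes the values $m,1,0,\dots,0$ on these classes, matching $\tfrac12(\rho_W+1_S-\epsilon_S)$ term by term.

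The main potential obstacle is this last identification, which is the only step that appears to require a concrete character-value check rather than a purely formal manipulation of induced characters; however, once the three kinds of conjugacy classes are listed and the induced character values evaluated, the verification is mechanical. A more conceptual alternative is to apply the projection formula $(1+\epsilon_S)\Ind_{\langle w_0\rangle}^W(1)=\Ind_{\langle w_0\rangle}^W\bigl(\Ind_{\{1\}}^{\langle w_0\rangle}(1)\bigr)=\rho_W$, but this only pins $\Ind_{\langle w_0\rangle}^W(1)$ down modulo $(1+\epsilon_S)$-torsion and still leaves a single value check on the reflection class to resolve.
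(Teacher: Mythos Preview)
Your proof is correct and rests on the same key observation as the paper's: the conjugation pairing $\chi_j\leftrightarrow\chi_{m-j}$ together with the identity $\sum_{j=1}^{m-1}\chi_j=\Ind_{\{1\}}^{W^+}(1)-1_{W^+}$. The paper organizes the endgame slightly differently---it restricts both sides to $W^+$ (Mackey gives $\Res^W_{W^+}\Ind_{W^+}^W\chi_j=\chi_j+\chi_{m-j}$) and then remarks that the restrictions to $\langle w_0\rangle$ also coincide---whereas you double the sum, induce everything to $W$, and finish with an explicit character-value check on the reflection class; these are two packagings of the same verification.
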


\begin{proof}
  We have $C_W((st)^j) = W^+$
and $\Res^W_{W^+}(\Ind_{W^+}^W(\chi_j)) = \chi_j + \chi_{m-j}$
 for all $j = 1, \dots,k$.  Hence
\begin{align*}
\Res^W_{W^+}\Bigl(\sum_{j=1}^k \Ind_{W^+}^W(\chi_j)\Bigr)
&= 
\sum_{j=1}^{m-1} \chi_j
= 
\Ind_{\{1\}}^{W^+}(1) - 1_{W^+} 
\\ &
= 
\Res^W_{W^+} (\Ind_{\Span{w_0}}^W(1) - 1_S)
= 
  \Res^W_{W^+} (\Phi_S).
\end{align*}
It follows that
\begin{align*}
   \Phi_S = \sum_{j=1}^k \Ind_{W^+}^W(\chi_j),
\end{align*}
 since the restrictions of
both characters to the subgroup $\Span{w_0}$ of $W$
also coincide.
\end{proof}

\begin{Proposition}\label{pro:PsiS}
  Let $\pi_{\AA}$ be the character of the permutation action of $W$ on
  the  hyperplane arrangement $\AA$.  Then  $W$  acts on  the degree  $1$
  component of $A(W)$ with character $\pi_{\AA}$, and
  $W$ acts on the component $A_{[S]}$ of $A(W)$ with character
  \begin{align*}
    \Psi_S =  \pi_{\AA} - 1_S.
  \end{align*}
  Consequently, $W$ acts on $A(W)$ with character $2 \pi_{\AA}$.
\end{Proposition}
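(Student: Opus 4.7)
The plan is to exploit the grading $A(W) = A^0 \oplus A^1 \oplus A^2$ (which is exact since $W$ has rank $2$), where $A^0 = \C$ carries the trivial character $1_S$, $A^2 = A_{[S]}$ is the top component, and $A^1$ has basis $\{a_t : t \in T\}$ with $W$-action $a_t \cdot w = a_{t^w}$. First I would observe that the bijection $t \leftrightarrow H_t$ between reflections and hyperplanes is $W$-equivariant, so $A^1 \cong \C \AA$ as $W$-modules and hence has character $\pi_{\AA}$. This proves the first assertion of the proposition.

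To obtain $\Psi_S$, I would relate $A^2$ to $A^1$ via multiplication by the $W$-invariant element $V = \sum_{t \in T} a_t \in A^1$: the map $\phi \colon A^1 \to A^2$, $a \mapsto V \cdot a$, is $W$-equivariant because $V$ is fixed and $W$ acts on $A(W)$ by algebra automorphisms. The key calculation is to show $\phi$ is surjective. Fix a reflection $t_0$; since $W$ has rank $2$, any three hyperplanes are linearly dependent, and the Orlik--Solomon relation gives $a_t a_{t'} = a_{t_0} a_{t'} - a_{t_0} a_t$ for all $t, t' \in T \setminus \{t_0\}$. Consequently the $(m-1)$-element set $\{a_{t_0} a_t : t \in T \setminus \{t_0\}\}$ spans $A^2$, and a short direct expansion of $V \cdot a_t$ in this spanning set shows that the images of $\phi$ generate $A^2$.

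Finally, $V^2 = 0$ by antisymmetry of the $a_t$'s, so $V \in \ker \phi$. Combined with the Poincar\'e polynomial $(1+t)\bigl(1+(m-1)t\bigr)$ of $A(W)$ for $I_2(m)$, which gives $\dim A^2 = m-1$, surjectivity forces $\dim \ker \phi = 1$ and therefore $\ker \phi = \C V$. Since $\C V$ is the trivial $W$-submodule of $A^1$, the first isomorphism theorem yields $A^2 \cong A^1 / \C V$ as $W$-modules, and hence $\Psi_S = \pi_{\AA} - 1_S$. Summing the three graded pieces then gives $\chi_{A(W)} = 1_S + \pi_{\AA} + (\pi_{\AA} - 1_S) = 2\pi_{\AA}$, as claimed. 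The only delicate step is the surjectivity of $\phi$, which reduces to a short computation with the Orlik--Solomon relations, made transparent by the rank-$2$ hypothesis.
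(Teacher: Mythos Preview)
Your proof is correct.  The approach, however, is presented quite differently from the paper's.  The paper labels the hyperplanes $H_0,\dots,H_{m-1}$, writes down the action of the generators $s$ and $ts$ on the basis $\{a_0a_j : 1\leq j\leq m-1\}$ of $A^2$, and then explicitly defines elements $b_0 = -\tfrac1m\sum_{j=1}^{m-1} a_0a_j$ and $b_j = a_0a_j + b_0$, checking by direct computation that $a_j\mapsto b_j$ is $W$-equivariant and that the only relation among the $b_j$ is $\sum_j b_j = 0$.  Your argument instead exploits the algebra structure: multiplication by the $W$-invariant element $V=\sum_{t\in T} a_t$ is automatically equivariant, and surjectivity plus the dimension count identify the kernel as the trivial line $\C V$.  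It is worth noting that the two arguments in fact construct the \emph{same} map up to a scalar: a short calculation with the rank-$2$ Orlik--Solomon relation you used shows $V\cdot a_j = m\,a_0a_j - \sum_{k\neq 0} a_0a_k$, so $\tfrac1m\,\phi(a_j)=b_j$ for all $j$.  Your packaging has the advantage of bypassing the explicit coordinate verification of equivariance, at the cost of invoking the Poincar\'e polynomial for the dimension of $A^2$; the paper's version is more self-contained, since the dimension is visible from the explicit basis, but requires tracking the $W$-action by hand.
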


\begin{proof}
The degree $1$ component of $A(W)$ has basis $\{a_t : t \in T\}$
and $W$ acts on it by permuting the basis vectors.
In order to analyze the top component of $A(W)$, we make this permutation
action explicit as follows.
  
Label the hyperplanes $H_0, \dots, H_{m-1}$, so that the hyperplane $H_j$
is spanned by $\zeta_{2m}^j$, where $\zeta_{2m} = e^{2\pi i/2m}$ is
a primitive $2m$th root of unity,
as shown in Figures~\ref{fig:i2odd} and~\ref{fig:i2even}.

Let $s$ be the reflection about $H_0$ (the $x$-axis) and $ts =
(st)^{-1}$ the (anti-clockwise) rotation about the angle $2 \pi /m$.
Then $t$ is the reflection about $H_{m-1}$.

The reflection $s$ then permutes the hyperplanes according to the rule
\begin{align*}
  H_j.s = H_{m-j},
\end{align*}
for $j = 0, \dots,m-1$, fixing $H_0$.
The rotation $ts$ acts as 
\begin{align*}
  H_j.ts = H_{j+2},
\end{align*}
for $j = 0, \dots,m-1$, where the indices are reduced mod $m$ if necessary.

The top  component $A_{[S]}$ has  a basis $\{a_0  a_j : j =  1, \dots,
m-1\}$, where $W$  acts on the indices as  indicated above, subject to
the relation $a_0 a_j - a_0 a_k + a_j a_k = 0$, i.e.,
\begin{align*}
a_j a_k  = a_0  a_k - a_0 a_j.
\end{align*}

The reflection $s$ fixes $H_0$ and thus maps $a_0 a_j$ to
\begin{align*}
  a_0 a_j.s = a_0 a_{m-j},
\end{align*}
for $j = 1, \dots, m-1$.
The rotation $ts$ maps $a_0 a_j$ to
\begin{align*}
  a_0 a_j.ts = a_2 a_{j+2} = 
  \begin{cases}
a_0 a_{j+2} - a_0 a_2, & j \neq m - 2, \\
-a_0 a_2, & j = m - 2.
  \end{cases}
\end{align*}
Now define vectors
\begin{align*}
  b_0 = -\frac1m\sum_{j=1}^{m-1} a_0 a_j
\end{align*}
and, for $j = 1, \dots, m-1$,
\begin{align*}
  b_j = a_0 a_j + b_0.
\end{align*}
Then $b_0.s =  b_0$ and $b_j.s = b_{m-j}$ for $j  = 1,\dots, m-1$.
Moreover,
$b_j.ts = b_{j+2}$ for $j =
0,\dots, m-1$, with indices reduced mod $m$ if necessary.  Hence the map $a_j \mapsto b_j$ is a $W$-equivariant bijection
from the basis $\{a_j : j = 0,\dots, m-1\}$ of the degree $1$ component
to a generating set $\{b_j : j = 0,\dots, m-1\}$ of $A_{[S]}$,
and since $\sum_{j=0}^{m-1} b_j = 0$ in $A_{[S]}$,  the character
of $W$ on $A_{[S]}$ is $\pi_{\AA} - 1_S$.
\end{proof}

\begin{Lemma} \label{la:aL-spans}
  The element $a_0 a_{m-1}$ generates the top component $A_{[S]}$ as
  $\C W$-module.
\end{Lemma}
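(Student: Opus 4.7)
The plan is to use the explicit description of the $W$-action on the basis $\{a_0 a_j : j=1,\dots,m-1\}$ of $A_{[S]}$ derived in the proof of Proposition~\ref{pro:PsiS}, together with the fundamental relation $a_j a_k = a_0 a_k - a_0 a_j$, to show that the cyclic $\C W$-submodule $M$ generated by $a_0 a_{m-1}$ contains every $a_0 a_j$ for $j = 1, \dots, m-1$, and hence equals $A_{[S]}$.

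First I would compute the action of the two Coxeter generators, or rather of $s$ and $ts$, on the element $a_0 a_{m-1}$. Since $s$ fixes $H_0$ and sends $H_{m-1}$ to $H_1$, we have $a_0 a_{m-1} . s = a_0 a_1 \in M$. Applying the rotation $ts$ to $a_0 a_{m-1}$ yields $a_2 a_{m+1} = a_2 a_1 = a_0 a_1 - a_0 a_2$. Combining this with $a_0 a_1 \in M$ already gives $a_0 a_2 \in M$. Thus both $a_0 a_1$ and $a_0 a_2$ lie in $M$.

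Next, I would run an induction on $j$ to show $a_0 a_j \in M$ for all $j = 1, \dots, m-1$. The key observation is that for $1 \le j \le m-3$, the action of $ts$ gives
\begin{align*}
  a_0 a_j . ts = a_2 a_{j+2} = a_0 a_{j+2} - a_0 a_2.
\end{align*}
Since $a_0 a_2 \in M$ by the previous paragraph, whenever $a_0 a_j \in M$ we obtain $a_0 a_{j+2} \in M$. Starting from the base cases $a_0 a_1, a_0 a_2 \in M$ and iterating, all the basis vectors $a_0 a_3, a_0 a_4, \dots, a_0 a_{m-1}$ are obtained. Hence $M$ contains the entire basis of $A_{[S]}$, proving the lemma.

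There is no real obstacle here; the only point requiring a small amount of care is the reduction of indices modulo $m$ in the action of $ts$ (the case $j = m-2$, which yields $a_0 a_{m-2} . ts = -a_0 a_2$, is never needed in the inductive step as written), and the systematic use of the straightening relation $a_j a_k = a_0 a_k - a_0 a_j$ to rewrite $a_2 a_{j+2}$ in terms of the chosen basis.
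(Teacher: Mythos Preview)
Your proof is correct and follows essentially the same approach as the paper: both arguments apply $s$ to obtain $a_0 a_1$, apply $ts$ and the straightening relation $a_j a_k = a_0 a_k - a_0 a_j$ to obtain $a_0 a_2$, and then run an induction powered by the $ts$-action to capture the remaining basis vectors $a_0 a_j$. The only cosmetic difference is that the paper's induction advances one index at a time via $a_{j-1}a_j = a_{j-3}a_{j-2}.ts$ and $a_0 a_j = a_0 a_{j-1} + a_{j-1} a_j$, whereas yours advances two indices at a time via $a_0 a_j . ts = a_0 a_{j+2} - a_0 a_2$; the underlying idea is identical.
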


\begin{proof}
  Let $M = a_0 a_{m-1}.\C W$.  Then $M$ contains the elements
  \[
  a_0a_1 = a_0 a_{m-1}.s, \quad a_1 a_2 = -a_0a_{m-1}.ts,
  \quad\text{and}\quad a_0a_2 = a_0a_1 + a_1a_2,
  \]
  and, by induction, the elements
  \[
  a_{j-1} a_j = a_{j-3} a_{j-2}.ts, \quad\text{and}\quad a_0 a_j = a_0
  a_{j-1} + a_{j-1} a_j,
  \]
  for $j > 2$. Consequently, $M$ contains the basis $\{a_0 a_j : j = 1,
  \dots, m-1\}$ of $A_{[S]}$, whence $M = A_{[S]}$.
\end{proof}

\begin{Proposition}\label{pro:psi=phi.e}
$\Psi_S = \Phi_S \epsilon_S$.  
\end{Proposition}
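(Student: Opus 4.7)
The plan is to combine Corollary~\ref{cor:PhiS}, Proposition~\ref{pro:PsiS}, and Lemma~\ref{la:aL-spans}. Applying the tensor identity $\Ind_H^G(\mu)\cdot \lambda = \Ind_H^G(\mu \cdot \lambda|_H)$ to Corollary~\ref{cor:PhiS} yields
\[
\Phi_S \epsilon_S = \Ind_{\Span{w_0}}^W(\epsilon_S|_{\Span{w_0}}) - \epsilon_S,
\]
so the proposition is equivalent to the assertion $\Ind_{\Span{w_0}}^W(\epsilon_S|_{\Span{w_0}}) = \Psi_S + \epsilon_S$, which is what I would aim to prove.

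The first main step would be to verify that $w_0$ acts on the cyclic generator $v = a_0 a_{m-1}$ of $A_{[S]}$ (furnished by Lemma~\ref{la:aL-spans}) by the scalar $\epsilon_S(w_0)$. If $m$ is even, then $w_0$ is central in $W$, so $v \cdot w_0 = v$, in agreement with $\epsilon_S(w_0) = (-1)^m = 1$. If $m$ is odd, then with the angle conventions of Proposition~\ref{pro:PsiS} the longest element $w_0$ is the reflection through $H_{(m-1)/2}$, which interchanges $H_0$ and $H_{m-1}$; the anticommutation relations in $A(W)$ then give $v \cdot w_0 = a_{m-1} a_0 = -v$, in agreement with $\epsilon_S(w_0) = -1$. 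By the universal property of induction, this yields a well-defined $\C W$-linear map $\Ind_{\Span{w_0}}^W(\epsilon_S|_{\Span{w_0}}) \to A_{[S]}$ sending the canonical generator to $v$, which is surjective by Lemma~\ref{la:aL-spans}. The kernel has dimension $m - (m-1) = 1$, so it affords a linear character $\lambda$ of $W$, and $\Ind_{\Span{w_0}}^W(\epsilon_S|_{\Span{w_0}}) = \Psi_S + \lambda$.

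Finally, I would identify $\lambda = \epsilon_S$ by a multiplicity calculation. On the one hand, Frobenius reciprocity gives $\langle \Ind_{\Span{w_0}}^W(\epsilon_S|_{\Span{w_0}}), \epsilon_S \rangle_W = 1$. On the other hand, $\langle \Psi_S, \epsilon_S \rangle_W = \langle \pi_{\AA}, \epsilon_S \rangle_W - \langle 1_S, \epsilon_S \rangle_W = 0$, the first term vanishing because the setwise stabilizer of each hyperplane $H \in \AA$ contains the reflection through $H$, on which $\epsilon_S$ is nontrivial, so $\langle 1, \epsilon_S|_{\Stab(H)} \rangle = 0$ for each $W$-orbit on $\AA$. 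Hence $\lambda = \epsilon_S$, completing the proof. The main obstacle is the parity-dependent identification of the action of $w_0$ on $v$; once $w_0$ has been placed geometrically, the rest is a formal consequence of Frobenius reciprocity and a dimension count.
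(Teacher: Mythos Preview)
Your argument is correct, but it follows a genuinely different route from the paper's. The paper proves the identity purely at the level of characters: it splits on the parity of $m$ and, using Corollary~\ref{cor:PhiS} and Proposition~\ref{pro:PsiS}, reduces to elementary character identities such as $\pi_{\AA} - \Ind_{\Span{w_0}}^W(1) = 1_S - \epsilon_S$ (for $m$ even) or $\Phi_S(w)=0$ whenever $\epsilon_S(w)=-1$ (for $m$ odd). Your proof is instead module-theoretic: you exploit the cyclic generator $a_0a_{m-1}$ of $A_{[S]}$ from Lemma~\ref{la:aL-spans} to build an explicit $\C W$-surjection $\Ind_{\Span{w_0}}^W(\epsilon_S|_{\Span{w_0}}) \twoheadrightarrow A_{[S]}$, and then pin down the one-dimensional kernel by a Frobenius reciprocity count. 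The paper does not use Lemma~\ref{la:aL-spans} here at all (it is only invoked later, in Proposition~\ref{pro:CforI2(2k+1)}), so your approach makes earlier use of that structural fact and gives a more conceptual reason why $\Psi_S + \epsilon_S$ is induced from $\Span{w_0}$. The trade-off is that the paper's argument is shorter and needs nothing beyond the character tables implicit in Corollary~\ref{cor:PhiS} and Proposition~\ref{pro:PsiS}, whereas yours requires the extra geometric verification of how $w_0$ acts on $a_0 a_{m-1}$; both arguments ultimately split on parity at exactly one point.
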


\begin{proof}
  We  distinguish two cases.

  If $m$  is odd, then $\pi_{\AA} = \Ind_{\Span{s}}^W(1)$, since
$C_W(s) = \Span{s}$ and all reflections are conjugates of~$s$.
Hence
\begin{align*}
  \Psi_S 
=  \Ind_{\Span{s}}^W(1) - 1_S
=  \Ind_{\Span{w_0}}^W(1) - 1_S
= \Phi_S
\end{align*}
and $\Phi_S = \Phi_S \epsilon_S$, since
$\Phi_S(w) = 0$ for all $w \in W$ with $\epsilon_S(w) = -1$.

If $m$ is even, then $\Ind_{\Span{w_0}}^W(1) \epsilon_S = \Ind_{\Span{w_0}}^W(1)$
and 
\begin{align*}
\Phi_S \epsilon_S = (\Ind_{\Span{w_0}}^W(1) - 1_S) \epsilon_S
= \Ind_{\Span{w_0}}^W(1) - \epsilon_S
= \pi_{\AA} - 1_S
= \Psi_S,
\end{align*}
since $\pi_{\AA} - \Ind_{\Span{w_0}}^W(1) = 1_S - \epsilon_S$, as can be 
easily verified.
\end{proof}

We can now conclude that Conjecture~\ref{conj:b} holds for $W$ of rank $2$.
 
\begin{Theorem}\label{thm:BforDihedral}
  Let $W$ be a Coxeter group of rank $2$, generated by $S = \{s, t\}$.
  Then, with notation as above, the top component characters of $W$
  are $\Phi_S = \Ind_{\Span{w_0}}^W(1) - 1_S$ and $\Psi_S = \pi_{\AA}
  - 1_S = \Phi_S \epsilon_S$.  Moreover, $W$ satisfies
  Conjecture~\ref{conj:b} with $\varphi_{(st)^j} = \chi_j$ in case $m$
  odd, while $\varphi_{w_0} = \epsilon_S$ and $\varphi_{(st)^j} =
  \chi_{2j}$ in case $m$ even.
\end{Theorem}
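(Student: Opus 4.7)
The plan is essentially to harvest the pieces already assembled in the preceding results and combine them into a single statement. The formula $\Phi_S = \Ind_{\Span{w_0}}^W(1) - 1_S$ is the content of Corollary~\ref{cor:PhiS}, and the description $\Psi_S = \pi_{\AA} - 1_S$ is precisely Proposition~\ref{pro:PsiS}. The identity $\Psi_S = \Phi_S\epsilon_S$ is Proposition~\ref{pro:psi=phi.e}. So the ``character formula'' portion of the theorem requires only three citations.

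For the assertion that $W$ satisfies Conjecture~\ref{conj:b}, my strategy is to exploit Remark~\ref{rem:comb}: since $\Psi_S = \Phi_S\epsilon_S$ has already been verified, it is enough to produce linear characters $\varphi_{w_C}$ of the centralizers $C_W(w_C)$, one for each cuspidal class $C$, such that $\Phi_S = \sum_{C} \Ind_{C_W(w_C)}^W \varphi_{w_C}$, and then define $\psi_{w_C} := \varphi_{w_C}\epsilon_S$. With this choice, part~(iii) of Conjecture~\ref{conj:b} holds by definition, and part~(ii) follows from part~(i) after multiplying the equation $\Phi_S = \sum_C \Ind \varphi_{w_C}$ by $\epsilon_S$ and applying the fact (reciprocity) that $(\Ind_{H}^W \chi)\epsilon_S = \Ind_H^W(\chi\,\epsilon_S|_H)$.

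I would then split on the parity of $m$ and invoke the two propositions tailored to each case. When $m = 2k+1$ is odd, Proposition~\ref{pro:i2cus1} supplies the characters $\varphi_{(st)^j} = \chi_j$ for $1 \le j \le k$ and verifies that they induce up to $\Phi_S$. When $m = 2k$ is even, Proposition~\ref{pro:i2cus0} supplies $\varphi_{w_0} = \epsilon_S$ together with $\varphi_{(st)^j} = \chi_{2j}$ for $1 \le j < k$ and again confirms that the induced sum equals $\Phi_S$. Our standing convention that $I_2(2)$ denotes $A_1\times A_1$ ensures that the rank-$2$ reducible case is covered by the even branch with $k = 1$, where the only cuspidal class is that of $w_0 = st$ and $\varphi_{w_0} = \epsilon_S$.

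There is no genuine obstacle here; the theorem is a consolidation. The one point meriting care is simply bookkeeping: matching the indexing of cuspidal class representatives $\{(st)^j : 1 \le j \le \Floor{m/2}\}$ with the two parity cases, and checking that the passage from $\varphi_{w_C}$ to $\psi_{w_C}$ via multiplication by $\epsilon_S$ commutes with induction so that~(ii) follows from~(i). Once this is noted, the proof consists of a one-line appeal to Corollary~\ref{cor:PhiS}, Propositions~\ref{pro:PsiS} and~\ref{pro:psi=phi.e}, and Propositions~\ref{pro:i2cus0}--\ref{pro:i2cus1}, using Remark~\ref{rem:comb} as the bridge.
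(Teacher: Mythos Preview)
Your proposal is correct and matches the paper's own proof, which is a one-line citation of Propositions~\ref{pro:i2cus0}, \ref{pro:i2cus1}, and~\ref{pro:psi=phi.e} together with Remark~\ref{rem:comc} (the analogue of your Remark~\ref{rem:comb}). Your version is slightly more explicit in also citing Corollary~\ref{cor:PhiS} and Proposition~\ref{pro:PsiS} for the stated formulae for $\Phi_S$ and $\Psi_S$, but the strategy is identical.
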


\begin{proof}
  Apply Propositions \ref{pro:i2cus0}, \ref{pro:i2cus1}, and
  \ref{pro:psi=phi.e}, and Remark \ref{rem:comc}.
\end{proof}

\begin{Corollary}\label{cor:rank2}
  Suppose that $W$ is a Coxeter group with rank at most $2$. Then Conjecture
  \ref{conj:a} holds for $W$.
\end{Corollary}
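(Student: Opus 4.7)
The plan is to deduce Corollary \ref{cor:rank2} from Theorem \ref{thm:c=>a}, which reduces Conjecture \ref{conj:a} for $W$ to the task of verifying Conjecture \ref{conj:c} for every subset $L \subseteq S$. Since $\Size{S} \leq 2$, only three size classes of subsets $L$ arise, and each is covered by a result already available in this section.

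First, I would dispose of all $L \subseteq S$ with $\Size{L} \leq 1$ by invoking Corollary \ref{cor:0,1}, which supplies Conjecture \ref{conj:c} in these cases regardless of the rank of the ambient group $W$. This handles every $L$ when $\Size{S} \leq 1$, and all proper subsets when $\Size{S} = 2$.

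The only remaining case is $\Size{S} = 2$ and $L = S$. Here $W_L = W$ is self-normalizing, so $N_L = \{1\}$ and $N_W(W_L) = W_L$; consequently $\widetilde{\Phi}_L = \Phi_S$, $\widetilde{\Psi}_L = \Psi_S$, and since $W$ acts on $V$ without nonzero fixed vectors, $\Fix(W_S) = \{0\}$ forces $\alpha_S$ to be the trivial character of $W$. In this situation Conjecture \ref{conj:c} for $L = S$ collapses to Conjecture \ref{conj:b} for $W$, which is exactly the content of Theorem \ref{thm:BforDihedral}.

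With Conjecture \ref{conj:c} thereby established for every $L \subseteq S$, Theorem \ref{thm:c=>a} yields Conjecture \ref{conj:a} for $W$. There is no real obstacle to surmount at this stage: all the substantive work (the computations of $\Phi_S$ and $\Psi_S$, the identification of linear characters of the $C_W((st)^j)$, and the verification $\Psi_S = \Phi_S \epsilon_S$) has already been carried out in Propositions \ref{pro:i2cus0}, \ref{pro:i2cus1}, and \ref{pro:psi=phi.e}; the corollary is essentially a bookkeeping assembly of these ingredients.
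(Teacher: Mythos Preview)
Your proposal is correct and follows essentially the same route as the paper's proof: reduce Conjecture~\ref{conj:a} to Conjecture~\ref{conj:c} via Theorem~\ref{thm:c=>a}, dispatch $\Size{L}\leq 1$ with Corollary~\ref{cor:0,1}, and for $L=S$ observe that Conjecture~\ref{conj:c} collapses to Conjecture~\ref{conj:b}, which is Theorem~\ref{thm:BforDihedral}. Your added explanation of why the case $L=S$ reduces to Conjecture~\ref{conj:b} (self-normalizing $W_L$, trivial $\alpha_S$) merely spells out what the paper records earlier in Section~\ref{sec:claim}.
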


\begin{proof}
  By Lemmas \ref{la:a0} and \ref{la:a1}, and Theorem
  \ref{thm:BforDihedral}, Conjecture \ref{conj:b} holds for all parabolic
  subgroups of $W$. By Theorem \ref{thm:c=>a} it suffices to show that
  Conjecture \ref{conj:c} holds for all subsets $L\subseteq S$. If
  $|L|=0,1$, this follows from Corollary \ref{cor:0,1}. It follows from
  Theorem \ref{thm:BforDihedral} that Conjecture \ref{conj:c} holds when the
  rank of $W$ and $|L|$ are both equal $2$.
\end{proof}

It follows in particular from Corollary \ref{cor:rank2} that every Coxeter
group of type $I_2(m)$ satisfies Conjecture~\ref{conj:a}.  We list the
corresponding decomposition of the regular character $\rho_W$ into
characters $\Phi_{[L]} = \Ind_{N_W(W_L)}^W \widetilde{\Phi}_L$ and the
decomposition of the Orlik-Solomon character $\omega_W$ into characters
$\Psi_{[L]} = \Ind_{N_W(W_L)}^W \widetilde{\Psi}_L$ in
Table~\ref{tab:phi} below. 
\begin{table}[htbp]
  \centering
\begin{align*}
  \begin{array}{c|ccccc}
    \hline
 & 1 & s & t & w_0 &(st)^i \\ \hline\hline
\Phi_{[\emptyset]} & 1 & 1 & 1 & 1 & 1 \\
\Phi_{[\{s\}]} & k & .\mid 1 & .\mid {-1} & -k & .  \\
\Phi_{[\{t\}]} & k & .\mid {-1} & .\mid 1 & -k & . \\
\Phi_{[S]} & m-1 & -1 & -1 & m-1 & -1 \\ \hline
\rho_W & 2m & . & . & . & . \\ \hline
\Psi_{[\emptyset]} & 1 & 1 & 1 & 1 & 1 \\
\Psi_{[\{s\}]} & k & 2 \mid 1 & . \mid 1 & k & .  \\
\Psi_{[\{t\}]} & k & . \mid 1 & 2 \mid 1 & k & . \\
\Psi_{[S]} & m-1 & 1 & 1 & m-1 & -1 \\ \hline
\omega_W & 2m & 4 & 4 & 2m & . \\ \hline
  \end{array}
\qquad
  \begin{array}{c|ccc}
    \hline
 & 1 & s & (st)^i \\ \hline\hline
\Phi_{[\emptyset]} & 1 & 1 & 1 \\
\Phi_{[\{s\}]} & m & -1 & . \\
\Phi_{[S]} & m-1 & . & -1 \\ \hline
\rho_W & 2m & . & . \\ \hline
\Psi_{[\emptyset]} & 1 & 1 & 1 \\
\Psi_{[\{s\}]} & m & 1 & . \\
\Psi_{[S]} & m-1 & . & -1 \\ \hline
\omega_W & 2m & 2 & .\\ \hline
  \end{array}
\end{align*}
\caption{The characters $\Phi_{\lambda}$ and $\Psi_{\lambda}$ for
    $I_2(m)$; $m=2k$, $m=2k+1$.}
  \label{tab:phi}
\end{table}
In Table~\ref{tab:phi}, the left character table covers the case $m = 2k$
and the right character table covers the case $m = 2k+1$.  The columns of
the character tables are labelled by representatives of the conjugacy
classes of $W$, where the parameter in $(st)^i$ is $i = 1, \dots, k-1$ for
$m = 2k$, and $i = 1, \dots, k$ for $m = 2k+1$.  An entry `$.$' in the table
stands for the value $0$.  As observed in Proposition~\ref{pro:PsiS}, the
rank $1$ component of $\omega_W$ is the permutation character of the action
of $W$ on the set $\AA$ of hyperplanes.  In case $m = 2k$, the constituent
$\Psi_{[\{s\}]}$ corresponds to the action on the $W$-orbit of the
hyperplane $H_s$, and whether the element $s$ has $2$ or $1$ fixed points in
this action depends on whether $k$ is even or odd. In such a situation, an
entry of the form `$x \mid y$' in the table stands for `$x$ if $k$ is even
and $y$ if $k$ is odd'.

We saw in Theorem \ref{thm:BforDihedral} that Conjecture \ref{conj:b} holds
when $W$ has rank $2$ and we saw in Corollary \ref{cor:0,1} that Conjecture
\ref{conj:c} holds when the subset $L\subseteq S$ has size $\Size{L} \leq
1$. In the rest of this section, we prove that if the
parabolic subgroup $W_L$ has rank two, then Conjecture~\ref{conj:c} holds
for any ambient group $W$. A similar result when $W_L$ is a product of
symmetric groups would reduce the proof of Conjecture \ref{conj:a} to
considering only a small number of cases.

From now on, $W$ is a finite Coxeter group, generated by $S$ with
$|S|\geq 3$ and $W_L$ is a rank $2$ parabolic subgroup of $W$ with $L
= \{s, t\} \subseteq S$.  The elements $x_K$ and $e_K$ are defined
relative to the ambient set $S$.  We use a superscript to indicate
this ambient set when it is not equal to $S$.
Thus, for $K \subseteq L$, $x_K^L$  denotes a basis element
of the descent algebra of $W_L$.

If $W_L$ is bulky, then $W_L$ satisfies Conjecture~\ref{conj:c}, by
Theorem~\ref{thm:bulky}.

If $W_L$ is not bulky, then $N_L$ does not centralize $W_L$ and so $N_L$
contains an element inducing the nontrivial graph automorphism $\gamma$ on
$W_L$, interchanging $s$ and $t$. In this case $s$ and $t$ are conjugate in
$W$ and so $W_L$ is either of type $A_1 \times A_1$ or of type $I_2(m)$ for
odd $m > 2$. We distinguish two cases accordingly.

First, suppose that $W_L$ is of type $A_1 \times A_1$.  Then $W_L$ has
exactly one cuspidal element $w = st = ts$, which is central in $W_L$ and
invariant under $N_L$, hence central in $N_W(W_L)$.  We have
\[
\varphi_w = \Phi_L = \epsilon_L,\quad\text{and} \quad \psi_w = \Psi_L = 1_L,
\]
by Corollary~\ref{cor:PhiS} and Proposition~\ref{pro:PsiS}.  Parts (i) and
(ii) of Conjecture~\ref{conj:c} are therefore trivially satisfied, with
\[
\widetilde{\varphi}_w = \widetilde{\Phi}_L, \quad\text{and}\quad
\widetilde{\psi}_w = \widetilde{\Psi}_L,
\]
which exist by Propositions~\ref{pro:Phi} and~\ref{pro:Psi}.

For part (iii) of Conjecture~\ref{conj:c}, 
note that the idempotent
\begin{align*}
  f = \tfrac14 (1 - s - t + st)
\end{align*}
spans a subspace of $\C W_L$ affording the character $\Phi_L$. As in the proof of Lemma~\ref{la:eS}, 
\begin{align*}
  e_L^{L} = 1 - \tfrac12 x_s^L - \tfrac12 x_t^L + \tfrac14 x_{\emptyset}^L
= \tfrac14 (1 + st) - \tfrac14 (s + t) = f,
\end{align*}
and thus $e_L^L f = e_L^L$ is a basis of the top component of $W_L$ which is
centralized by $N_L$.  Hence $\widetilde{\varphi}_w(un) = \varphi_w(u)$, for
$u \in W_L$ and $n \in N_L$.  Moreover, note that $a_L = a_s a_t$ spans the
top component of $A(W_L)$, and that $e_L n = e_L$, whereas $a_L .n =
\sigma_L(n) a_L$ for $n \in N_L$.  It follows that $\widetilde{\psi}_L(un) =
\psi_L(u) \sigma_L(n) = \varphi_L(u) \epsilon(u) \epsilon(n) \alpha_L(n) =
\widetilde{\varphi}_L(un) \epsilon(un) \alpha_L(un)$, for $u \in W_L$ and $n
\in N_L$, as desired.  This proves the
following proposition.

\begin{Proposition}\label{pro:CforA1A1}
  Suppose $L = \{s,t\} \subseteq S$ is such that $W_L$ is of type $A_1\times
  A_1$. Then Conjecture~\ref{conj:c} holds for $L\subseteq S$.
\end{Proposition}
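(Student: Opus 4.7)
The plan is to exploit the fact that $W_L$ of type $A_1 \times A_1$ has a unique cuspidal class, represented by the central element $w = st = ts$. Since any non-trivial element of $N_L$ realizes the graph automorphism of $W_L$ that swaps $s$ and $t$, it fixes $w$, so $w$ is central in $N_W(W_L)$ and hence $N_W(W_L) \subseteq C_W(w)$. Applying Conjecture~\ref{conj:b} to the rank $2$ group $W_L$ (via Corollary~\ref{cor:PhiS} and Proposition~\ref{pro:PsiS} specialized to $m=2$) supplies the linear characters $\varphi_w = \Phi_L = \epsilon_L$ and $\psi_w = \Psi_L = 1_L$ of $C_{W_L}(w) = W_L$.

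For parts (i) and (ii) of Conjecture~\ref{conj:c} I would simply take $\widetilde{\varphi}_w = \widetilde{\Phi}_L$ and $\widetilde{\psi}_w = \widetilde{\Psi}_L$, whose existence is guaranteed by Propositions~\ref{pro:Phi} and~\ref{pro:Psi}. Because $\CC_L$ consists of the single class of $w$, the sums on the right-hand side of (i) and (ii) collapse to a single induction from $C_W(w) = N_W(W_L)$ to itself, so the identities are tautological.

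The main obstacle is part (iii), the identity $\widetilde{\psi}_w = \widetilde{\varphi}_w \epsilon_S \alpha_L$ on $N_W(W_L) = W_L \rtimes N_L$. The strategy is to make both extensions explicit by writing down a one-dimensional generator of each top component and tracking the action of $N_L$. On the descent side, as in the proof of Lemma~\ref{la:eS}, the idempotent
\[
e_L^L = 1 - \tfrac12 x_s^L - \tfrac12 x_t^L + \tfrac14 x_{\emptyset}^L = \tfrac14(1 - s - t + st)
\]
spans a copy of the top component of $\C W_L$, and being symmetric in $s$ and $t$ it is fixed by $N_L$; thus $\widetilde{\varphi}_w(un) = \varphi_w(u)$ for $u \in W_L$, $n \in N_L$. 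On the Orlik--Solomon side, the top component of $A(W_L)$ is spanned by $a_L = a_s a_t$, and the graph automorphism sends $a_s a_t \mapsto a_t a_s = -a_s a_t$, so $a_L\cdot n = \sigma_L(n)\, a_L$ and $\widetilde{\psi}_w(un) = \psi_w(u)\,\sigma_L(n)$.

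To finish I would combine these calculations with the relation $\psi_w = \varphi_w \epsilon_L$, with Lemma~\ref{la:alpha-sigma} (which yields $\sigma_L(n) = \epsilon_S(n)\alpha_L(n)$), and with $W_L \subseteq \ker\alpha_L$, to obtain
\[
\widetilde{\psi}_w(un) = \varphi_w(u)\,\epsilon_L(u)\,\sigma_L(n) = \widetilde{\varphi}_w(un)\,\epsilon_S(un)\,\alpha_L(un),
\]
which is exactly (iii). This mirrors the mechanism of Theorem~\ref{thm:bulky}, adjusted for the fact that here $N_L$ need not centralize $W_L$ but acts on the two top components through known signs that match $\epsilon_S\alpha_L$ on the nose.
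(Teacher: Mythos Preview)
Your proposal is correct and follows essentially the same route as the paper: identify the unique cuspidal element $w=st$ as central in $N_W(W_L)$, take $\widetilde{\varphi}_w=\widetilde{\Phi}_L$ and $\widetilde{\psi}_w=\widetilde{\Psi}_L$ so that (i) and (ii) are tautologies, and then verify (iii) by noting that $e_L^L=\tfrac14(1-s-t+st)$ is $N_L$-fixed while $a_L=a_sa_t$ transforms by $\sigma_L$, and applying Lemma~\ref{la:alpha-sigma}. One small wording issue: not every nontrivial element of $N_L$ need realize the graph automorphism (some may centralize $W_L$), but since every element of $N_L$ permutes $\{s,t\}$ it fixes $st$ regardless, so your conclusion stands.
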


Second, suppose that $W_L$ is of type $I_2(m)$ for $m$ odd. 
Recall that the character $\chi_j \colon st \mapsto \zeta_m^j$ is afforded by the subspace of $\C W^+$ spanned by the idempotent
\begin{align}\label{eq:fj}
  f_j &= \frac1m\sum_{k=0}^{m-1} \zeta_m^{jk} (st)^{-k},
\end{align}
for $j = 1, \dots, m-1$.
As usual, denote by $w_L$ the longest element of $W_L$.
Note that $f_j^{w_L} = f_{m-j}$, for $j = 1, \dots, m-1$, since $(st)^{w_L} = (st)^{-1}$, and that
\begin{align*}
  e_L^L f_j = \Av(\Span{w_L}) f_j,
\end{align*}
by Lemma~\ref{la:eS}, since $\Av(W_L) f_j =
\sum_{k=0}^{m-1} \zeta_m^{jk} \Av(W_L) = 0$,
for $j = 1, \dots, m-1$. 

Obviously, the graph automorphism $\gamma$ swaps $e_L^L f_j$ and $e_L^L f_{m-j}$,
and so does right multiplication by $w_L$:
\begin{align*}
    e_L^L f_j w_L &
=  \Av(\Span{w_L}) f_j w_L 
=  \Av(\Span{w_L}) w_L\, f_j^{w_L}  \\&
=  \Av(\Span{w_L}) f_j^{w_L} 
=  \Av(\Span{w_L}) f_{m-j} 
=    e_L^L f_{m-j}.
\end{align*}
Moreover, if $n \in N_L$ induces the automorphism $\gamma$ on $W_L$,
then $w_L n \in C_W(st)$.  Therefore, if we write $N_L = N_L^+ \cup
N_L^-$, where $N_L^+ = N_L \cap C_W(st)$ and $N_L^- = N_L \setminus
N_L^+$, then we have
\begin{align*}
 C_W(st) = C_{W_L}(st) N_L^+ \cup C_{W_L}(st) w_L N_L^-.
\end{align*}
It follows that we can naturally extend the characters $\varphi_{(st)^j}$
to characters  $\widetilde{\varphi}_{(st)^j}$ of the full centralizer $C_W(st)$ 
via
\begin{align*}
  \widetilde{\varphi}_{(st)^j}(c) = \varphi_{(st)^j}(v),
\end{align*}
where either $c = vn$ for some $v \in C_{W_L}(st)$ and $n \in N_L^+$,
or $c = v w_L n$ for
some $v \in C_{W_L}(st)$ and $n \in N_L^-$.

We are now in a position to prove that Conjecture~\ref{conj:c} holds in this
case.

\begin{Proposition}\label{pro:CforI2(2k+1)}
  Suppose $L = \{s,t\} \subseteq S$ is such that the order $m$ of $st$ is
  odd.  Then Conjecture~\ref{conj:c} holds for $L\subseteq S$.
\end{Proposition}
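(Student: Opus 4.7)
The plan is to apply Remark~\ref{rem:proveC}: since Conjecture~\ref{conj:b} for $W_L$ is covered by Theorem~\ref{thm:BforDihedral}, it suffices to prove Conjecture~\ref{conj:c}(i) together with the identity $\widetilde{\Psi}_L = \widetilde{\Phi}_L\,\epsilon_S \alpha_L$; then defining $\widetilde{\psi}_{(st)^j} := \widetilde{\varphi}_{(st)^j}\,\epsilon_S \alpha_L$ yields (ii) and (iii) via the projection formula $(\Ind \widetilde{\varphi})\cdot\epsilon_S\alpha_L = \Ind(\widetilde{\varphi}\cdot\Res\,\epsilon_S\alpha_L)$. As a preliminary I would verify that $\widetilde{\varphi}_{(st)^j}$ is a well-defined linear character. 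The key structural observation is that $K := N_L^+ \cup w_L N_L^-$ is a subgroup of $N_W(W_L)$---closure follows from $nw_L = w_L n$ for every $n \in N_L$, which in turn comes from the fact, specific to $m$ odd, that $\gamma(w_L) = w_L$---and that every element of $K$ centralizes $W^+$. Hence $C_W((st)^j) = W^+ \times K$ is a direct product, and $\widetilde{\varphi}_{(st)^j} = \chi_j \times 1_K$ is automatically a linear character.

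For Conjecture~\ref{conj:c}(i), I would apply Mackey's formula: since $W_L \cdot C_W((st)^j) = N_W(W_L)$ is a single double coset, $\Res^{N_W(W_L)}_{W_L} \Ind\widetilde{\varphi}_{(st)^j} = \Ind_{W^+}^{W_L}\chi_j = \rho_j$, so $\Ind\widetilde{\varphi}_{(st)^j}$ is an extension of the irreducible $W_L$-character $\rho_j$ to $N_W(W_L)$. The $\rho_j$-isotypic component $\widetilde{\rho}_j$ of $\widetilde{\Phi}_L$ is another such extension, so the two differ by a linear character $\chi$ of $N_L$ pulled back along the projection $N_W(W_L) \to N_L$. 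I would then prove $\chi = 1_{N_L}$ by evaluating at two kinds of elements. On $n \in N_L^+$: since $n$ centralizes $W_L$ it acts trivially on $E_L$, so both characters give $\rho_j(1) = 2$. On $w_L n_0$ with $n_0 \in N_L^-$: the identities $e_L^L f_j \cdot w_L = e_L^L f_{m-j}$ and $\gamma(f_{m-j}) = f_j$ noted in the excerpt imply that $w_L n_0$ fixes each of the two basis vectors $e_L^L f_j, e_L^L f_{m-j}$ spanning the $\rho_j$-block, giving $\widetilde{\rho}_j(w_L n_0) = 2$; on the induced side, the two coset representatives for $W^+ \times K \subset N_W(W_L)$ each contribute $\widetilde{\varphi}_{(st)^j}(w_L n_0) = 1$, summing to $2$. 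Hence $\chi$ is trivial on all of $N_L$.

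For the identity $\widetilde{\Psi}_L = \widetilde{\Phi}_L\,\epsilon_S\alpha_L$, the strategy is to show that both $\widetilde{\Phi}_L$ and $\widetilde{\Psi}_L$ coincide, in fact both equaling $\pi_{\AA_L} - 1_{N_W(W_L)}$, where $\pi_{\AA_L}$ is the natural extension to $N_W(W_L)$ of the permutation character on the hyperplanes $\AA_L$. For $\widetilde{\Phi}_L$: Lemma~\ref{la:eS} writes $e_L^L = \Av(\Span{w_L}) - \Av(W_L)$, so $E_L$ decomposes as the permutation module on $W_L/\Span{w_L}$ minus the trivial, and since $\Span{w_L}$ is $N_W(W_L)$-invariant (as $\gamma(w_L) = w_L$), the $N_W(W_L)$-equivariant bijection $g\Span{w_L} \leftrightarrow g w_L g^{-1}$ identifies $\widetilde{\Phi}_L$ with $\pi_{\AA_L} - 1_{N_W(W_L)}$. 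For $\widetilde{\Psi}_L$: since $\langle \Psi_L, 1_L\rangle_{W_L} = 0$, the $N_W(W_L)$-module $A^1(W_L)$ has a canonical splitting $A_L \oplus \C\sum_j a_j$, giving $\widetilde{\Psi}_L = \pi_{\AA_L} - 1_{N_W(W_L)}$ as well. The identity then reduces to $\pi_{\AA_L}(g) = 1$ whenever $(\epsilon_S\alpha_L)(g) = -1$; by Lemma~\ref{la:alpha-sigma} this sign equals $\epsilon_L(u)\sigma_L(n)$ for $g = un$, and is $-1$ in exactly two cases: either $u$ is a reflection in $W_L$ and $n \in N_L^+$ (so $un$ acts on $\AA_L$ as $u$, fixing only $u$'s own hyperplane), or $u \in W^+$ and $n \in N_L^-$ (a rotation composed with $\gamma$, fixing exactly one hyperplane, computable from $2j \equiv 1 - 2i \pmod m$). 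The main obstacle is pinning down the $N_W(W_L)$-module structure on $A_L$ rigorously, since the basis $\{b_j\}$ from Proposition~\ref{pro:PsiS} is not $\gamma$-equivariant; passing instead to the canonical trivial complement in $A^1(W_L)$ is the cleanest workaround.
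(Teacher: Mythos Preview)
Your overall strategy via Remark~\ref{rem:proveC} matches the paper's, but the execution diverges at both steps.

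For part~(i), the paper works directly with the explicit basis $\{e_L f_j\}$ of $e_L\,\C W_L$, groups it into two-dimensional $N_W(W_L)$-submodules $M_j = \Span{e_L f_j, e_L f_{m-j}}$, and identifies each $M_j$ with $\Ind_{C_W(st)}^{N_W(W_L)}\widetilde{\varphi}_{(st)^j}$ by inspection. Your Clifford-theoretic route---Mackey to see that $\Ind\widetilde{\varphi}_{(st)^j}$ extends the irreducible $\rho_j$, then comparing with the $\rho_j$-block of $\widetilde{\Phi}_L$ by evaluating at $n\in N_L^+$ and $w_L n_0$---is a legitimate alternative and is correct. In fact your subgroup $K$ does more than you claim: for $m$ odd the longest element $w_L$ itself swaps $s$ and $t$ by conjugation, so every element of $K = N_L^+ \cup w_L N_L^-$ centralises \emph{all} of $W_L$, not just $W^+$. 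Thus $N_W(W_L) = W_L \times K$, and both $\widetilde{\Phi}_L$ and $\widetilde{\Psi}_L$ are automatically the trivial extensions $\Phi_L\times 1_K$ and $\Psi_L\times 1_K$. This observation would short-circuit most of your computations.

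For the identity $\widetilde{\Psi}_L = \widetilde{\Phi}_L\,\epsilon_S\alpha_L$, the paper builds an explicit intertwiner: it exhibits the basis $\{a_L f_j\}$ of the top component and checks $a_L f_j \cdot w = \epsilon(w)\alpha_L(w)\,e_L f_j\, w$ on the generators $st$, $w_L$, and $n\in N_L$. Your alternative---showing both characters equal $\pi_{\AA_L}-1$ and then running the case analysis on $\epsilon_S\alpha_L$---is appealing, and your fixed-point counts are correct. However, your justification of $\widetilde{\Psi}_L = \pi_{\AA_L}-1$ has a genuine gap. The phrase ``$A^1(W_L)$ has a canonical splitting $A_L \oplus \C\sum_j a_j$'' is not well-formed: $A_L$ sits in degree~$2$, not degree~$1$, so it is not a summand of $A^1(W_L)$. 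What you need is an $N_W(W_L)$-equivariant isomorphism between $A_L$ and the complement of the trivial line in $A^1(W_L)$, and merely knowing the two are $W_L$-isomorphic does not supply one. One clean fix is the Orlik--Solomon derivation $\partial\colon A_L \to A^1(W_L)$, $a_i a_j \mapsto a_j - a_i$, which is visibly $N_W(W_L)$-equivariant and lands isomorphically on that complement; another is the $W_L\times K$ observation above, which makes both extensions trivial on $K$ and reduces the identity to $\Psi_L = \Phi_L\epsilon_L$ together with $\epsilon_S\alpha_L|_K = 1_K$ (the latter because $\epsilon(w_L)\sigma_L(n_0) = (-1)(-1) = 1$).
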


\begin{proof}
  We have that $\Phi_L = \sum_{j=1}^k \Ind_{C_{W_L}(st)}^{W_L}
  \varphi_{(st)^j}$, by Theorem~\ref{thm:BforDihedral}.

Recall from \cite[Sec.~3]{DouglassPfeiffer2011}  that  
left multiplication by $x_L$ defines an isomorphism of the right
$W_L$-modules $e_L^L \C W_L$ and $e_L \C W_L$.
Therefore, the elements $e_L f_j = x_L e_L^L f_j$, for $j = 1, \dots, m{-}1$, form a
$\C$-basis of 
$e_L \C W_L$, which as  $N_W(W_L)$-module affords the character
$\widetilde{\Phi}_L$, and as $W_L$-module is isomorphic to the top
component $E_L$ with character $\Phi_L$.

Moreover, if we denote $M_j = e_L f_j \C N_W(W_L)$, then the
$N_W(W_L)$-module $M_j$ has $\C$-basis $\{e_L f_j, e_L f_{m-j}\}$, due
to the nontrivial action of $\gamma$ and $w_L$, and the direct sum
$\bigoplus_{j=1}^k M_j$ is a decomposition of $e_L \C W_L$ as
$N_W(W_L)$-module.
Consequently,
part (i) of Conjecture~\ref{conj:c}  follows from the observation
that
as $W_L$-module  $M_j$  affords the character $\Ind_{C_{W_L}(st)}^{W_L} \varphi_{(st)^j}$ and
as $N_W(W_L)$-module it affords the character $\Ind_{C_{W}(st)}^{N_W(W_L)} \widetilde{\varphi}_{(st)^j}$, i.e.,
\begin{align*}
  \widetilde{\Phi}_L = \sum_{j=1}^k \Ind_{C_W(st)}^{N_W(W_L)}
  \widetilde{\varphi}_{(st)^j}.
\end{align*}

  By Remark \ref{rem:comc}, it now suffices to show that
  $\widetilde{\Psi}_L= \widetilde{\Phi}_L \epsilon_S \alpha_L$. For this,
  denote $a_L = a_s a_t$, and recall from Lemma~\ref{la:aL-spans} that $a_L
  \C W_L$ is isomorphic to the top component of $A(W_L)$. Since $m$ is odd,
  we have $W_L = \Span{st} \cup w_L \Span{st}$ and thus
  \begin{align*}
    a_L \C \Span{st} = a_L \C W_L,
  \end{align*}
  since $a_L w_L = a_s a_t.w_L = a_t a_s = - a_s a_t = -a_L$.

Since the idempotents $f_j$ from equation (\ref{eq:fj})
form a Wedderburn basis of the group algebra $\C \Span{st}$, 
the module $a_L \C \Span{st}$ is spanned by the elements
$\{a_L f_j : j = 0, \dots, m-1\}$, and
since
\begin{align*}
   a_L f_0 
&= \sum_{k=0}^{m-1} a_0 a_{m-1} .(ts)^k
= \sum_{k=0}^{m-1} a_{k+1} a_k
\\&= a_0 a_{m-1} - a_0 a_1 + \sum_{k=1}^{m-2} a_0  a_k - a_0 a_{k+1}
= 0,
\end{align*}
 we also have that the set $\{a_L f_j : j = 1, \dots, m-1\}$
is a $\C$-basis of $a_L \C W_L$. 
Conjecture~\ref{conj:c} (iii) now follows if we can show that
\begin{align}\label{eq:final}
  a_L f_j.w = \epsilon(w) \alpha_L(w) e_L f_j w,
\end{align}
for all $w \in N_W(W_L)$. It suffices  to show this for $w = st$, $w =
w_L$, and for $w = n \in N_L$.

For $w = st$, (\ref{eq:final}) follows, since $f_j st = \zeta_m^j f_j$ and
$\epsilon(st) = \alpha_L(st) = 1$.
For $w = w_L$, (\ref{eq:final}) follows, since $f_j w_L = w_L f_{m-j}$ and $e_L w_L
= e_L$, $a_L w_L =  -a_L$, and $\epsilon(w_L) = -1$ and $\alpha_L(w_L)
= 1$.
Finally, for $w  = n \in N_L$ (\ref{eq:final})   holds, since $f_j n =  n f_j^n$ and
$e_L  n =  e_L$, $a_L  n =  
\sigma_L(n) a_L$ and $\sigma_L(n) =  \epsilon(n)\alpha_L(n)$, by
Lemma~\ref{la:alpha-sigma}.
\end{proof}

We summarize Propositions~\ref{pro:CforA1A1}, \ref{pro:CforI2(2k+1)}, and
Theorem~\ref{thm:bulky} for rank $2$ parabolic subgroups as follows.

\begin{Theorem}\label{thm:CforDihedral}
  Suppose that $W_L$ is a rank $2$ parabolic subgroup of $W$.
  Then Conjecture~\ref{conj:c} holds for $W_L$.
\end{Theorem}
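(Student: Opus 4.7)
The plan is to reduce the theorem to a case analysis on the isomorphism type of the rank~$2$ parabolic $W_L$ and on whether $W_L$ sits bulky inside~$W$. First I would dispose of the bulky case immediately: Conjecture~\ref{conj:b} is known for every rank~$2$ Coxeter group by Theorem~\ref{thm:BforDihedral} (and for rank~$\le 1$ by Lemmas~\ref{la:a0} and~\ref{la:a1}), so whenever $W_L$ is bulky in~$W$ an application of Theorem~\ref{thm:bulky} with the characters $\widetilde{\varphi}_{w_C}=\varphi_{w_C}\times 1_{N_L}$ and $\widetilde{\psi}_{w_C}=\psi_{w_C}\times 1_{N_L}$ yields Conjecture~\ref{conj:c} for $L \subseteq S$.

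Next I would analyze the non-bulky case. If $W_L$ is not bulky, then $N_L$ fails to centralize $W_L$, so $N_L$ must contain an element that induces the non-trivial graph automorphism $\gamma$ of $W_L$ swapping~$s$ and~$t$. In particular $s$ and $t$ become conjugate in~$W$, which forces $W_L$ to be either of type $A_1\times A_1$ or of type $I_2(m)$ with $m$ odd (the even dihedral groups $I_2(2k)$ with $k>1$ have two distinct conjugacy classes of reflections and thus admit no such graph automorphism). So only these two sub-cases remain after removing the bulky situation.

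Finally, each remaining sub-case has already been handled explicitly: Proposition~\ref{pro:CforA1A1} treats $W_L$ of type $A_1\times A_1$ (where the only cuspidal class is $\{st\}$ and the extensions $\widetilde{\varphi}_{st}=\widetilde{\Phi}_L$, $\widetilde{\psi}_{st}=\widetilde{\Psi}_L$ supplied by Propositions~\ref{pro:Phi} and~\ref{pro:Psi} do the job), while Proposition~\ref{pro:CforI2(2k+1)} covers $W_L$ of type $I_2(m)$ with $m$ odd (using the explicit idempotents $f_j$ and the centralizer decomposition $C_W(st)=C_{W_L}(st)N_L^+\cup C_{W_L}(st)w_L N_L^-$ to extend each $\varphi_{(st)^j}=\chi_j$ to $\widetilde{\varphi}_{(st)^j}$ on $C_W(st)$). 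Combining these three results covers every possibility for a rank~$2$ parabolic $W_L$ of~$W$, which proves the theorem.

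The proof itself is essentially bookkeeping, so there is no genuine obstacle at this stage; the real work was carried out in Theorem~\ref{thm:bulky} and Propositions~\ref{pro:CforA1A1} and~\ref{pro:CforI2(2k+1)}. The one conceptual step worth emphasizing is the dichotomy between bulky and non-bulky embeddings, together with the observation that non-bulkiness in rank~$2$ forces the presence of the graph automorphism and hence restricts the type of $W_L$ to $A_1\times A_1$ or $I_2(m)$ with $m$ odd.
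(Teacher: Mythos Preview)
Your proposal is correct and follows exactly the paper's own argument: the theorem is stated as a summary of Theorem~\ref{thm:bulky} (for the bulky case) together with Propositions~\ref{pro:CforA1A1} and~\ref{pro:CforI2(2k+1)} (for the two non-bulky types), and the paper makes precisely the same bulky/non-bulky dichotomy and the same reduction of the non-bulky case to $A_1\times A_1$ or $I_2(m)$ with $m$ odd that you describe. One tiny remark: your parenthetical that $I_2(2k)$ ``admits no such graph automorphism'' is slightly imprecise (the abstract automorphism swapping $s$ and $t$ does exist); the operative fact, as in the paper, is that an element of $N_L$ swapping $s$ and $t$ makes $s$ and $t$ conjugate in $W$, which in a finite Coxeter system rules out $m(s,t)$ even and $>2$.
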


\bigskip 

{\bf Acknowledgments}: The authors acknowledge the financial
support of the DFG-priority programme SPP1489 ``Algorithmic and Experimental
Methods in Algebra, Geometry, and Number Theory''.  Part of the research for
this paper was carried out while the authors were staying at the
Mathematical Research Institute Oberwolfach supported by the ``Research in
Pairs'' programme in 2010.
The second author wishes to acknowledge support from Science Foundation
Ireland.

\bibliographystyle{plain}
\bibliography{dihedral}
\end{document}